\titleformat{\section}{\normalsize\bfseries\centering}{\thesection.}{0.3em}{}
\titleformat{\subsection}[runin]{\normalsize\bfseries}{\thesubsection.}{0.3em}{}
\newtheorem{thm}{Theorem}[section]
\newtheorem{prop}[thm]{Proposition}
\theoremstyle{definition}
\newtheorem{defn}{Definition}[section]
\newtheorem{ex}{Example}[section]
\theoremstyle{remark}
\newtheorem*{rem}{Remark}
\newcommand{\co}{\colon\thinspace}
\title{\textbf{{\large HIGHER-ORDER CONGRUENCE RELATIONS ON AFFINE MOMENT GRAPHS: THE SUBGENERIC CASE}}}
\author{{\normalsize KSENIJA KITANOV}}
\date{}
\begin{document}

\maketitle

\begin{abstract}
We study the structure algebra $\mathcal{Z}$ of the stable moment graph for the case of the affine root system $A_{1}$. The structure algebra $\mathcal{Z}$ is an algebra 
over a symmetric algebra and in particular, it is a module over a symmetric algebra. We study this module structure on $\mathcal{Z}$ and we construct a basis. By 
``setting $c$ equal to zero'' in $\mathcal{Z}$, we obtain the module $\mathcal{Z}_{c=0}$. This module can be described in terms of the finite root system $A_{1}$ and we 
show that it is determined by a set of certain divisibility relations. These relations can be regarded as a generalization of ordinary moment graph relations that define 
sections of sheaves on moment graphs, and because of this we call them higher-order congruence relations.
\end{abstract}

\textbf{Keywords:} sheaves on moment graphs, the affine Weyl group, alcoves 

\section{INTRODUCTION} 

In 1979 Kazhdan and Lusztig introduced a special basis of the Hecke algebra attached to a Coxeter system (cf.~\cite{KL79}). Entries of the transition matrix between the 
standard basis and this special basis are given by the Kazhdan-Lusztig polynomials. In the same article a character formula for simple highest weight modules over a 
complex semisimple Lie algebra was conjectured. This character formula was expressed in terms of the Kazhdan-Lusztig polynomials associated with the Weyl group of a 
semisimple Lie algebra. The Kazhdan-Lusztig polynomials together with the Kazhdan-Lusztig conjecture, which was proved independently by Beilinson and Bernstein 
(cf.~\cite{BB81}) and by Brylinski and Kashiwara (cf.~\cite{BK81}), constitute the foundations of geometric representation theory.

From a Coxeter group, a standard parabolic subgroup and a sign Deodhar constructed parabolic (spherical or anti-spherical\textemdash{}depending on the sign) modules over 
the Hecke algebra (cf.~\cite{Deo87}). More precisely, following the notation of~\cite{Soe97}, let $(\mathcal{W}, \mathcal{S})$ be a Coxeter system and $\mathcal{H}$ its 
Hecke algebra over the ring $\mathcal{L}=\mathbb{Z}[v,v^{-1}]$ of Laurent polynomials in one variable with integer coefficients. Let $I \subset \mathcal{S}$ be a subset 
of the set of simple reflections, $\mathcal{W}_{I}=\langle I \rangle$ the parabolic subgroup corresponding to $I$ and $\mathcal{H}_{I}$ the Hecke algebra of 
$(\mathcal{W}_{I}, I)$. For $u \in \{ -v, v^{-1} \}$, $\mathcal{L}=\mathcal{L}(u)$ can be endowed with a structure of an $\mathcal{H}_{I}$-bimodule. After induction two 
right $\mathcal{H}$-modules are obtained: the spherical module $\mathcal{M}=\mathcal{L}(v^{-1}) \otimes_{\mathcal{H}_{I}} \mathcal{H}$ and the anti-spherical module 
$\mathcal{N}=\mathcal{L}(-v) \otimes_{\mathcal{H}_{I}} \mathcal{H}$ (the terms \emph{spherical} and \emph{anti-spherical} are taken from~\cite{LW}). Parabolic (spherical 
or anti-spherical) Kazhdan-Lusztig polynomials appear as elements of the transition matrix between the standard basis and the Kazhdan-Lusztig basis of the parabolic 
(spherical or anti-spherical) Hecke module and they can be regarded as a generalization of the ordinary Kazhdan-Lusztig polynomials (when $I=\emptyset$, i.e. the parabolic 
subgroup $\mathcal{W}_{I}$ is trivial, the parabolic Kazhdan-Lusztig polynomials coincide with the ordinary Kazhdan-Lusztig polynomials).

The Kazhdan-Lusztig polynomials possess fascinating properties, which are not apparent from their definition. Let us mention non-negativity of their coefficients, which 
was proved in full generality by Elias and Williamson (cf.~\cite{EW14}). The same property was established for the parabolic Kazhdan-Lusztig polynomials in~\cite{LW}.

Another distinguished property, proved by Lusztig in~\cite{Lus80}, is the stabilization property of the Kazhdan-Lusztig polynomials. It can be stated in terms of the 
parabolic Kazhdan-Lusztig polynomials corresponding to the spherical module $\mathcal{M}$ ($m$-polynomials in Soergel's notation, cf.~\cite{Soe97}, Theorem 6.1): far 
enough inside the fundamental (Weyl) chamber the $m$-poly\-no\-mi\-als stabilize. In this stabilization phenomenon the generic Kazhdan-Lusztig polynomials ($q$-polynomials 
in Soergel's notation) make their appearance.

Via sheaves on moment graphs (these notions were introduced in~\cite{GKM98} and~\cite{BM01}) the stabilization property of the Kazhdan-Lusztig polynomials is lifted to 
the categorical level. In~\cite{Lan15} the moment graph analogue of this stabilization property\textemdash{}\emph{the stable moment graph}\textemdash{}is defined. It is an
oriented graph whose vertices are given by the alcoves in the fundamental (Weyl) chamber and whose edges are labeled with affine coroots. The stable moment graph 
arises as a subgraph of the parabolic Bruhat graph and with the help of the stable moment graph the stabilization property is established in the categorical framework of 
sheaves on moment graphs: the stalks of indecomposable Braden-MacPherson sheaves on finite intervals of the parabolic Bruhat graph (far enough inside the fundamental (Weyl) 
chamber) stabilize as well (cf.~\cite{Lan15}). In addition, in characteristic $0$, graded ranks of the stalks of indecomposable Braden-MacPherson sheaves (for the definition 
of the graded rank and connection to the Kazhdan-Lusztig polynomials, the reader may refer to~\cite{Fie10}) on the stable moment graph are given by the generic 
Kazhdan-Lusztig polynomials\textemdash{}the previously mentioned $q$-polynomials that, besides appearing in the stabilization phenomenon of the parabolic spherical 
polynomials, also appear as coefficients (up to a factor $\pm1$ and an involution) in the expansion with respect to the standard basis of the periodic Hecke module (for 
more details on the periodic Hecke module, the reader may consult~\cite{Lus80} and~\cite{Soe97}) of certain self-dual elements of a completion of the periodic Hecke module 
(cf.~\cite{Kat85}, \cite{Soe97}, Theorem 6.4.2).

Also via sheaves on moment graphs in~\cite{Fie11} Fiebig constructs a weak categorification of the affine Hecke algebra. The category in this weak categorification is a 
category of special modules over the affine structure algebra that can be interpreted as a category of sheaves on moment graphs and even as Soergel's category of bimodules 
associated with a reflection faithful representation of a Coxeter system (the latter interpretation is established in~\cite{Fie08b} and Soergel's construction can be found 
in~\cite{Soe07}). Furthermore, Fiebig's combinatorial weak categorification of the affine Hecke algebra is generalized to the case of the (parabolic) spherical Hecke module 
in~\cite{Lan14}.

Let us now observe another categorification of the Hecke algebra that is representation-theoretic in its nature\textemdash{}category $\mathcal{O}$ of Bernstein, Gelfand and 
Gelfand. In~\cite{Fie08a} a link between (equivariant) representation theory and theory of sheaves on moment graphs is formed: if a moment graph is associated with a 
non-critical block $\mathcal{O}_{\Lambda}$ of the equivariant category $\mathcal{O}$ over a symmetrizable Kac-Moody algebra, then a certain subcategory of the category 
of sheaves on this associated moment graph is equivalent (as an exact category) to the subcategory of $\mathcal{O}_{\Lambda}$ of modules that admit a finite (equivariant) 
Verma flag. Therefore, the stable moment graph should conjecturally yield information on stabilization phenomena for non-critical singular blocks of the (equivariant) 
category $\mathcal{O}$ for affine Kac-Moody algebras.

Let us add a few words on the structure algebra of the moment graph associated with a root system. Let $k$ be a field and $S$ the symmetric algebra over $k$ associated with 
the coroot lattice. The structure algebra is a commutative algebra over $S$. If $k=\mathbb{C}$, the structure algebra of the moment graph associated with a non-critical 
block of the equivariant category $\mathcal{O}$ over a symmetrizable Kac-Moody algebra is isomorphic to the categorical center of that non-critical block 
(cf.~\cite{Fie08a}). If $\text{char}(k)=0$, the structure algebra of the moment graph associated with a complex equivariantly formal variety with a torus action is 
isomorphic to the torus equivariant cohomology with coefficients in $k$ of that equivariantly formal variety (cf.~\cite{GKM98}).

In this article we study the structure algebra $\mathcal{Z}$ of the stable moment graph for the case of the affine root system $A_{1}$ (the subgeneric case). Let us 
mention that the obtained results apply to an arbitrary affine root system\textemdash{}the studied phenomena occur if we localize at a root direction. In the forthcoming 
article, first we consider the case of the affine root system $A_{2}$. We look at a certain (translated) subgraph of the stable moment graph whose set of vertices is 
obtained by translations of the finite Weyl group orbit of the fundamental alcove by a negative integer multiple of a positive (co)root and we study its structure algebra. 
Here we apply our results from the subgeneric case by ``stretching out'' in a root direction and by ``gluing''. Using the same approach, we then consider the general case. 
In particular, the structure algebra $\mathcal{Z}$ is a module over the polynomial algebra $S$ and we examine this $S$-module structure of $\mathcal{Z}$. We construct an 
$S$-basis (of ``linear-algebraic type'') of $\mathcal{Z}$. Then we regard the same notions for the case of the finite root system $A_{1}$: the associated moment graph 
$\mathcal{G}^{\text{fin}}$, the corresponding symmetric algebra $S^{\text{fin}}$ and the structure algebra $\mathcal{Z}^{\text{fin}}$. After ``setting $c$ equal to zero'' 
($c$ is a central element of the affine Kac-Moody algebra $\widehat{\mathfrak{sl}_{2}}$) in the structure algebra $\mathcal{Z}$, we obtain the $S^{\text{fin}}$-module 
$\mathcal{Z}_{c=0}$. We observe that this $S^{\text{fin}}$-module is contained in $\prod_{j \in \mathbb{Z}_{\geq 0}} \mathcal{Z}^{\text{fin}}$ (we could say that it is 
``locally finite'') and our main result (Theorem~\ref{main}) is that the module $\mathcal{Z}_{c=0}$ is determined by (quite surprising) divisibility relations, which we 
call \emph{higher-order congruence relations}.

\subsection{Contents.} In Section 2 we collect the basic structural results on affine Kac-Moody algebras (e.g. affine roots, the affine Weyl group, alcoves) with the 
purpose of setting the notation. 

In Section 3 we consider affine Bruhat graphs (Definition~\ref{abg}) with special emphasis placed on the parabolic Bruhat graph and its subgraph that contains only 
stable edges\textemdash{}edges whose labels are invariant under translation by an element of the finite coroot lattice, i.e. the stable moment graph (Definition~\ref{smg}). 
We end Section 3 with the definition of the structure algebra $\mathcal{Z}$ of a moment graph (Definition~\eqref{eq:stralg}).

In Section 4 first we construct a basis of the $S$-module $\mathcal{Z}$ of the stable moment graph in the subgeneric case (Propositions~\ref{unvn} and~\ref{bas}). Then 
we define the $S^{\text{fin}}$-module $\mathcal{Z}_{c=0}$ by ``setting $c$ equal to zero'' in the $S$-module $\mathcal{Z}$ (Subsection~\ref{subsec:ctozero}) and we 
establish the main result of this article, Theorem~\ref{main}:

\[
\mathcal{Z}_{c=0} = \Bigg\{ (\mathbf{a}_{j}) \in \prod_{j \in \mathbb{Z}_{\geq 0}} \mathcal{Z}^{\textnormal{fin}}
                         \biggm\vert \ \sum_{j=0}^{m} (-1)^{j} \binom{m}{j} \mathbf{a}_{j} \in ((-\alpha)^{m}, \alpha^{m}) 
                         \mathcal{Z}^{\textnormal{fin}} \ \forall m \in \mathbb{Z}_{\geq 0} \Bigg\}.
\]

\section{PRELIMINARIES}

\subsection{Affine Kac-Moody algebras.} Let $\mathfrak{g}$ be a finite-dimensional simple complex Lie algebra and let $\mathfrak{\widehat{g}}$ be the corresponding 
affine Kac-Moody algebra, which as a vector space is equal to $\mathfrak{g}\otimes_{\mathbb{C}}\mathbb{C}[t,t^{-1}] \oplus \mathbb{C}c \oplus \mathbb{C}d$, where 
$c$ is a central element and $d$ is a derivation operator. 

Let $\mathfrak{h} \subset \mathfrak{g}$ be a Cartan subalgebra. Then the corresponding affine Cartan subalgebra of $\mathfrak{\widehat{g}}$ and its dual are given by 
$\mathfrak{\widehat{h}} = \mathfrak{h} \oplus \mathbb{C}c \oplus \mathbb{C}d$ and $\mathfrak{\widehat{h}^{*}} = \mathfrak{h^{*}} \oplus \mathbb{C}\delta \oplus 
\mathbb{C}\Lambda_{0}$, respectively. The Killing form $(\cdot, \cdot)$ on $\mathfrak{g}$ can be extended to a symmetric non-degenerate bilinear form on 
$\mathfrak{\widehat{g}}$. This form induces a non-degenerate bilinear form on the Cartan subalgebra $\mathfrak{\widehat{h}}$ and consequently establishes an isomorphism 
$\mathfrak{\widehat{h}} \overset{\sim}{\to} \mathfrak{\widehat{h}^{*}}$, which identifies $c$ with $\delta$. We also denote by $(\cdot, \cdot)$ the induced symmetric 
non-degenerate bilinear form on the dual $\mathfrak{\widehat{h}^{*}}$. 

\subsection{Affine roots.} Let $\Delta \subset \mathfrak{h^{*}}$ be the set of roots of $\mathfrak{g}$ with respect to $\mathfrak{h}$. The set $\widehat{\Delta} 
\subset \mathfrak{\widehat{h}^{*}}$ of roots of $\mathfrak{\widehat{g}}$ with respect to $\mathfrak{\widehat{h}}$ is 
\[ \widehat{\Delta} = \{ \alpha + n\delta \mid \alpha \in \Delta,n \in \mathbb{Z} \} \cup \{ n\delta \mid n \in \mathbb{Z},n \neq 0 \}. \] 

If we denote by $\Delta_{+} \subset \Delta$ the subset of chosen positive (finite) roots, then the set of positive affine roots is
\[ \widehat{\Delta}_{+} = \{ \alpha + n\delta \mid \alpha \in \Delta,n \geq 1 \} \cup \Delta_{+} \cup \{ n\delta \mid n \geq 1 \}, \] 
while the set of positive real roots is  
\[ \widehat{\Delta}^{\text{re}}_{+} = \{ \alpha + n\delta \mid \alpha \in \Delta,n \geq 1 \} \cup \Delta_{+}. \]

Furthermore, if $\Pi$ is the set of simple (finite) roots and $\theta \in \Delta^{+}$ is the highest root, then \[ \widehat{\Pi} = \Pi \cup \{ -\theta + \delta \} \] 
is the set of simple affine roots.

\subsection{The Weyl group and its set of alcoves.} For every real root $\alpha + n\delta$ we can define the reflection
\begin{align*}
s_{\alpha+n\delta}\co \mathfrak{\widehat{h}^{*}} &\to \mathfrak{\widehat{h}^{*}} \\
\lambda &\mapsto \lambda - 2\frac{(\lambda, \alpha + n\delta)}{(\alpha, \alpha)}(\alpha + n\delta).
\end{align*} 
This reflection fixes pointwise the hyperplane $(\cdot, \alpha + n\delta)=0$ and maps $\alpha + n\delta$ to $-\alpha-n\delta$. Using the isomorphism $\mathfrak{\widehat{h}}
\overset{\sim}{\to} \mathfrak{\widehat{h}^{*}}$ we may identify the affine coroot $(\alpha+n\delta)^{\vee}$ with $\frac{2(\alpha+n\delta)}{(\alpha,\alpha)}$ and then the 
reflection $s_{\alpha+n\delta}$ is given by $s_{\alpha+n\delta}(\lambda)=\lambda-\langle\lambda,(\alpha+n\delta)^{\vee}\rangle(\alpha+n\delta)$, where 
$\langle\cdot,\cdot\rangle \co \mathfrak{\widehat{h}^{*}}\times\mathfrak{\widehat{h}} \to \mathbb{C}$ is the natural pairing. We denote by $\mathcal{\widehat{W}} 
\subset GL(\mathfrak{\widehat{h}^{*}})$ the affine Weyl group, that is, the subgroup generated by the reflections $s_{\beta}$ for $\beta \in 
\widehat{\Delta}^{\text{re}}_{+}$ and by $\mathcal{\widehat{S}} = \{ s_{\beta} \mid \beta \in \widehat{\Pi} \}$ the set of simple reflections of $\mathcal{\widehat{W}}$. 
Then $(\mathcal{\widehat{W}},\mathcal{\widehat{S}})$ forms a Coxeter system and by $\leq$ we denote the Bruhat order on $\mathcal{\widehat{W}}$.

Let $\mathfrak{h^{*}_{\mathbb{R}}}$ be the linear span over $\mathbb{R}$ of $\Pi$. We set $\mathfrak{\widehat{h}^{*}}_{\mathbb{R}} = \mathfrak{h^{*}_{\mathbb{R}}} \oplus 
\mathbb{R}\delta \oplus \mathbb{R}\Lambda_{0}$. Now we will consider another realization of $\mathcal{\widehat{W}}$, as a group of affine transformations of 
$\mathfrak{h^{*}_{\mathbb{R}}}$. In order to do that, we identify $\mathfrak{h^{*}_{\mathbb{R}}}$ with the affine space $\mathfrak{\widehat{h}^{*}}_{1} \textrm{ mod } 
\mathbb{R}\delta$, where $\mathfrak{\widehat{h}^{*}}_{1} = \{ \lambda \in \mathfrak{\widehat{h}^{*}}_{\mathbb{R}} \mid \langle \lambda , c \rangle = 1 \}$ 
(cf.~\cite[\S 6.6.]{Kac90}). Then $\mathcal{\widehat{W}}$ acts on $\lambda \in \mathfrak{h^{*}_{\mathbb{R}}}$ by
\begin{equation} \label{eq:left}
s_{\alpha+n\delta}(\lambda) = s_{\alpha}(\lambda)-2\frac{n}{(\alpha,\alpha)}\alpha \textrm{ mod } \mathbb{R}\delta = s_{\alpha}(\lambda)-n\alpha^{\vee} \textrm{ mod } 
\mathbb{R}\delta.
\end{equation} 
We denote by $Q^{\vee}$ the coroot lattice of $\mathfrak{g}$ and by $T_{\mu}$ the translation by $\mu \in Q^{\vee}$, i.e. $T_{\mu}(\lambda) = \lambda + \mu$ for 
$\lambda \in \mathfrak{h^{*}_{\mathbb{R}}}$, so~\eqref{eq:left} can be written as
\[ s_{\alpha+n\delta}(\lambda) = T_{-n\alpha^{\vee}}s_{\alpha}(\lambda) \  \textrm{ for }\lambda \in \mathfrak{h^{*}_{\mathbb{R}}}, \]
i.e. $s_{\alpha+n\delta} = T_{-n\alpha^{\vee}}s_{\alpha}$. The group of translations by an element of the coroot lattice is a normal subgroup of the affine Weyl group 
and it holds that $\mathcal{\widehat{W}}=\mathcal{W} \ltimes Q^{\vee}$.

Let us denote by $H_{\alpha, n} \coloneqq \{ \lambda \in \mathfrak{h^{*}_{\mathbb{R}}} \mid (\lambda, \alpha) = -n \}$ the affine hyperplane that $s_{\alpha + n\delta}$ 
fixes pointwise. The connected components of \[\mathfrak{h^{*}_{\mathbb{R}}} \setminus \bigcup_{\alpha+n\delta \in \widehat{\Delta}^{\text{re}}_{+}} H_{\alpha, n}\] are 
called \emph{alcoves} and we denote by $\mathcal{A}$ the set of all alcoves. The \emph{fundamental (Weyl) chamber} is $\mathcal{C}^{+} \coloneqq \{ \lambda \in 
\mathfrak{h^{*}_{\mathbb{R}}} \mid (\lambda, \alpha) > 0 \ \forall \alpha \in \Pi \}$ (an element $\lambda \in \mathcal{C}^{+}$ is called \emph{dominant weight}) and 
let $\mathcal{A}^{+} = \{ A \in \mathcal{A} \mid A \subset \mathcal{C}^{+} \}$ be the set of all alcoves contained in $\mathcal{C}^{+}$. 

There is a one-to-one correspondence between $\mathcal{\widehat{W}}$ and $\mathcal{A}$. Let $A^{+}$ be the unique alcove in $\mathcal{A}^{+}$ whose closure contains the 
zero vector. $A^{+}$ is called the \emph{fundamental alcove} and we have 
\begin{gather*}
A^{+}=\{ \lambda \in \mathfrak{h^{*}_{\mathbb{R}}} \mid 0 < (\lambda, \alpha) < 1 \ \forall \alpha \in \Delta_{+} \}= \\ =\{ \lambda \in 
\mathfrak{h^{*}_{\mathbb{R}}} \mid 0 < (\lambda, \alpha) \ \forall \alpha \in \Pi , \ (\lambda, \theta) < 1 \}
\end{gather*}
(cf.~\cite[\S 4.3.]{Hum90}). Since the left action (given by~\eqref{eq:left}) of the affine Weyl group $\mathcal{\widehat{W}}$ on $\mathcal{A}$ is simply transitive 
(cf.~\cite[\S 4.5.]{Hum90}), the bijection between $\mathcal{\widehat{W}}$ and $\mathcal{A}$ is given by 
\begin{equation} \label{eq:WA}
w \quad \mapsto \quad wA^{+}. 
\end{equation}

Finally, let us observe that the Bruhat order on $\mathcal{\widehat{W}}$ induces a partial order on $\mathcal{A}$: for $A, B \in \mathcal{A}$ where $A=xA^{+}$, $B=yA^{+}$, 
$x, y \in \mathcal{\widehat{W}}$, $A \leq B \iff x \leq y$. It is again called \emph{Bruhat order}.

\section{MOMENT GRAPHS}

In this section we introduce the notion of a moment graph over a lattice and then we consider particular moment graphs that are important for the representation theory 
of affine Kac-Moody algebras. We conclude this section with the definition of the structure algebra of a moment graph.

\begin{defn} (cf.~\cite{Fie16})
Let $Y \cong \mathbb{Z}^{r}$ be a lattice of finite rank. A \emph{moment graph} over the lattice $Y$ is the datum $\mathcal{G} = 
(\mathcal{V}, \mathcal{E}, \leq, l)$ where:
 \begin{enumerate}
 \item[\textnormal{(i)}] $(\mathcal{V}, \mathcal{E})$ is a directed graph without loops or multiple edges,
 \item[\textnormal{(ii)}] $\leq$ is a partial order on $\mathcal{V}$ such that if $x, y \in \mathcal{V}$ and $E : x \rightarrow y$,
 then $x \leq y$,
 \item[\textnormal{(iii)}] $l \co \mathcal{E} \to Y \setminus \{0\}$ is a map called the labeling. 
 \end{enumerate}
\end{defn}

Using the notation of the previous section, for any subset $I$ of $\mathcal{\widehat{S}}$ we define now the \emph{affine Bruhat graph} $\mathcal{\widehat{G}}^{I}$ in 
the following way.
\begin{defn} (cf.~\cite{Fie11}) \label{abg}
$\mathcal{\widehat{G}}^{I}$ is the moment graph over the affine coroot lattice $\widehat{Q}^{\vee} = Q^{\vee} \oplus \mathbb{Z}$ given by
 \begin{enumerate}
 \item[\textnormal{(i)}] $\mathcal{V} \coloneqq \mathcal{\widehat{W}}^{I}$ where $\mathcal{\widehat{W}}^{I}$ is the set of minimal length representatives of
 the left cosets of $\langle I \rangle$ in $\mathcal{\widehat{W}}$,
 \item[\textnormal{(ii)}] $\leq$ is the Bruhat order on $\mathcal{\widehat{W}}$ and $\mathcal{E} \coloneqq \{ x \rightarrow y \mid x < y, \  \exists \beta 
 \in \widehat{\Delta}^{\text{re}}_{+}, \  \exists w \in \langle I \rangle \textrm{ such that } y = s_{\beta}xw \}$,
 \item[\textnormal{(iii)}] $l(x \rightarrow s_{\beta}xw) \coloneqq \beta^{\vee}$.
 \end{enumerate}
\end{defn}

If $I = \emptyset$, the corresponding affine Bruhat graph is called the \emph{regular Bruhat graph} of $\mathfrak{\widehat{g}}$. If $I = \mathcal{S} \coloneqq 
\{ s_{\beta} \mid \beta \in \Pi \}$ (the set of simple reflections of the finite Weyl group $\mathcal{W}$), we denote by $\mathcal{\widehat{G}}^{\text{par}} \coloneqq 
\mathcal{\widehat{G}}^{\mathcal{S}}$ the \emph{parabolic Bruhat graph} of $\mathfrak{\widehat{g}}$.

Let us observe that we can approach the set of vertices of $\mathcal{\widehat{G}}^{\text{par}}$ in two ways: 
\begin{enumerate}
 \item[\textnormal{(a)}] \label{itm:a} via the identification with the finite coroot lattice $Q^{\vee}$ (obtained by the orbit-stabilizer theorem) or 
 \item[\textnormal{(b)}] via the identification with the set $\mathcal{A}^{+}$ of alcoves contained in the fundamental chamber $\mathcal{C}^{+}$ (obtained intermediately 
 via the set of minimal length representatives of the right cosets of $\mathcal{W}$ in $\mathcal{\widehat{W}}$ and~\eqref{eq:WA}).
\end{enumerate}

\begin{ex} \label{corlat}
Let $\mathfrak{\widehat{g}} = \widehat{\mathfrak{sl}_{2}}$. Then the set of positive real roots is 
\[\widehat{\Delta}^{\text{re}}_{+} = \{ \pm \alpha + n\delta \mid n \geq 1 \} \cup \{ \alpha \}\] where $\alpha$ is the unique positive root of $\mathfrak{sl}_{2}$ 
with the property $(\alpha,\alpha)=2$. Simple affine reflections are $s_{0} \coloneqq s_{-\theta+\delta} = s_{-\alpha+\delta} = T_{\alpha}s_{\alpha}$ and 
$s_{1} \coloneqq s_{\alpha}$, so we have 
\begin{align*}
T_{\alpha}&=s_{0}s_{1},\\
s_{-\alpha+n\delta}&=T_{n\alpha}s_{\alpha}=(s_{0}s_{1})^{n}s_{1}=(s_{0}s_{1})^{n-1}s_{0},\\
s_{\alpha+n\delta}&=T_{-n\alpha}s_{\alpha}=(s_{1}s_{0})^{n}s_{1}
\end{align*}
for $n \in \mathbb{Z}_{>0}$. Using the isomorphism $\mathfrak{\widehat{h}} \overset{\sim}{\to} \mathfrak{\widehat{h}^{*}}$ we obtain 
\[(\pm \alpha + n\delta )^{\vee}=\pm \alpha^{\vee}+\frac{2n}{(\alpha,\alpha)}c=\pm \alpha + nc.\] 
By~\hyperref[itm:a]{(a)} the set of vertices of $\mathcal{\widehat{G}}^{\text{par}}$ is $\mathbb{Z}\alpha^{\vee} = \mathbb{Z}\alpha$ and we have
\begin{equation*}
\begin{split}
s_{-\alpha+(n+n')\delta}(n\alpha)=s_{\alpha-(n+n')\delta}(n\alpha)=T_{(n+n')\alpha}s_{\alpha}(n\alpha)=\\
=-n\alpha+(n+n')\alpha=n'\alpha, \quad n, n' \in \mathbb{Z}.
\end{split}
\end{equation*} 
It follows that $\mathcal{\widehat{G}}^{\text{par}}$ (considered as an undirected graph) is a complete graph and the labeling is given by
\[
l(n\alpha \ \textrm{\textemdash} \ n'\alpha)=\begin{cases}
                                           -\alpha + (n+n')c \quad \textrm{if } n+n' > 0 \\
                                           \alpha - (n+n')c \quad \textrm{if } n+n'\leq 0. 
                                           \end{cases}
\] 
Regarding the direction of the edges, notice that
\[
n\alpha=(s_{0}s_{1})^{n-1}s_{0}(0) \quad \textrm{ for } n > 0\\
\]
and
\[
n\alpha=(s_{1}s_{0})^{\abs{n}}(0) \quad \textrm{ for } n \leq 0,
\]
so we have
\[ 
n\alpha < n'\alpha \quad \iff \quad \abs{n} < \abs{n'} \enspace \veebar \enspace n'=-n < 0.
\]
Hence the interval $[n\alpha, (-n-2)\alpha], \ n > 0$ of the parabolic Bruhat graph $\mathcal{\widehat{G}}^{\text{par}}$ is as in Figure~\ref{fig:parabolic}.
\end{ex}

\begin{figure}[!htb]
\centering
\def\svgwidth{0.75\textwidth}
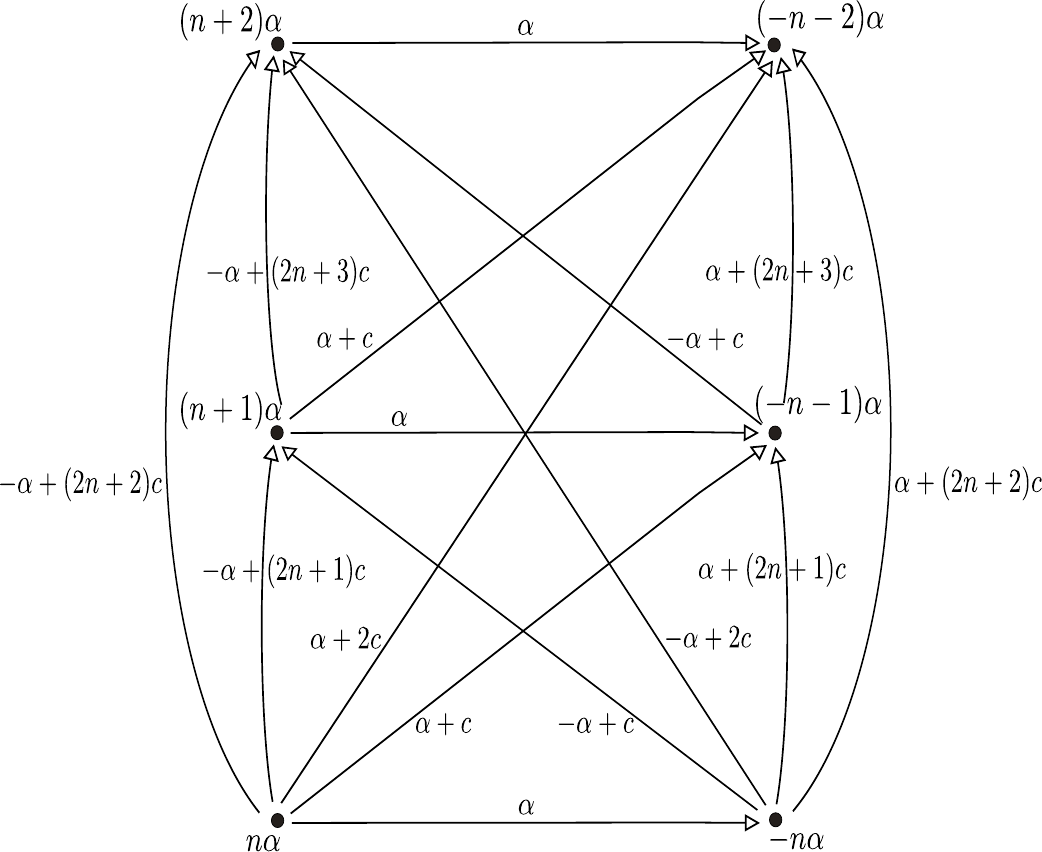
\caption{The interval $[n\alpha, (-n-2)\alpha], n > 0$ of the parabolic Bruhat graph of $\widehat{\mathfrak{sl}_{2}}$} \label{fig:parabolic}
\end{figure}

The set of edges of the parabolic Bruhat graph $\mathcal{\widehat{G}}^{\text{par}}$ far enough in the fundamental chamber consists of \emph{stable} and 
\emph{non-stable} edges. It holds that the labels of stable edges are invariant under translation by $\mu \in Q^{\vee}$, but that does not hold for the labels of 
non-stable edges (Figure~\ref{fig:stable}). This motivates the following definition. 

\begin{figure}[!htb]
\centering
\def\svgwidth{0.5\textwidth}
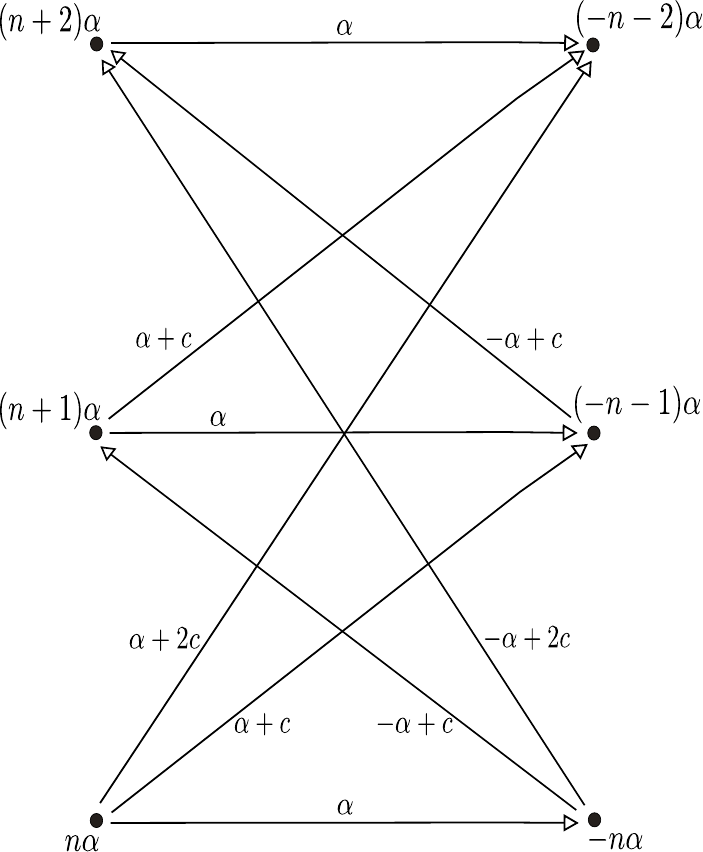
\caption{The subgraph of the parabolic Bruhat graph of $\widehat{\mathfrak{sl}_{2}}$ that contains only stable edges} \label{fig:stable}
\end{figure}

\begin{defn} (cf.~\cite{Lan15}) \label{smg}
The stable moment graph $\mathcal{\widehat{G}}^{\text{stab}}=(\mathcal{V}, \mathcal{E}, \leq, l)$ is the moment graph over the affine coroot lattice $\widehat{Q}^{\vee}$ 
given by
\begin{enumerate}
 \item[\textnormal{(i)}] $\mathcal{V} \coloneqq \mathcal{A}^{+}$,
 \item[\textnormal{(ii)}] $\leq$ is the Bruhat order on $\mathcal{A}$ and $\mathcal{E} \coloneqq \{ xA^{+} \rightarrow yA^{+} \mid 
 x \leq y, \  \exists \beta \in \widehat{\Delta}^{\text{re}}_{+} \ \textrm{ such that } \ y = xs_{\beta} \}$,
 \item[\textnormal{(iii)}] $l(xA^{+} \rightarrow xs_{\beta}A^{+}) \coloneqq \beta^{\vee}$.
\end{enumerate}
\end{defn}

Let $\mathcal{G}$ be a moment graph defined over the lattice $Y$ and $k$ a field. Let us denote by $Y_{k} = Y \otimes_{\mathbb{Z}} k$ the $k$-vector space spanned by 
$Y$ and by $S=S(Y_{k})$ the symmetric algebra of $Y_{k}$, which is a polynomial algebra (over $k$) of rank $\dim_{k}Y_{k}$. The \emph{structure algebra} $\mathcal{Z}$ of 
$\mathcal{G}$ is
\begin{align} \label{eq:stralg}
\mathcal{Z} \coloneqq \bigg\{ (z_{x}) \in \prod_{x \in \mathcal{V}} S \biggm\vert \ & \textrm{ for each edge } E : x \ \textrm{\textemdash} \ y \nonumber \\
& \quad z_{x}-z_{y}=l(E)s \ \textrm{ for some } s \in S \bigg\}.
\end{align}
$\mathcal{Z}$ is indeed an $S$-algebra: the addition and the multiplication are given componentwise and $S$ acts on $\mathcal{Z}$ via the diagonal action.

\section{THE STRUCTURE ALGEBRA OF THE STABLE MOMENT GRAPH IN THE SUBGENERIC CASE}

Let $\mathfrak{\widehat{g}} = \widehat{\mathfrak{sl}_{2}}$. Then the (coroot) lattice $Y$ is $\mathbb{Z} \alpha \oplus \mathbb{Z} c$ (Example~\ref{corlat}) and the 
corresponding symmetric algebra $S$ over the field $k=\mathbb{C}$ is $\mathbb{C}[\alpha, c]$. Let $\mathcal{Z}=\mathcal{Z}(\mathcal{\widehat{G}}^{\text{stab}}_{>0\alpha})$ 
be the structure algebra of the moment graph $\mathcal{\widehat{G}}^{\text{stab}}_{>0\alpha}$ (Figure~\ref{fig:stable}). Let us remark that omitting the vertex 
$0\alpha$ corresponds to being far enough inside the fundamental (Weyl) chamber, away from its walls. 

We explore now the $S$-module structure of the structure algebra $\mathcal{Z}$. The first step in that direction consists of constructing perhaps the most natural 
(from a linear algebra perspective) elements $u_{n},\, n \in \mathbb{Z}_{\geq 0}$ and $v_{n},\, n \in \mathbb{Z}_{>0}$ of the structure algebra $\mathcal{Z}$ 
(Proposition~\ref{unvn}). Afterwards, we will show that these elements form a basis of the $S$-module $\mathcal{Z}$ (Proposition~\ref{bas}). 

We use the following conventions: a product with only one factor evaluates to that factor, while a product with no factors, i.e. an empty product evaluates to $1$.

\begin{prop} \label{unvn}
Let $u_{n},\, n \in \mathbb{Z}_{\geq 0}$ and $v_{n},\, n \in \mathbb{Z}_{>0}$ be defined by
 \[
 (u_{n})_{j\alpha} \coloneqq 
 \begin{dcases}
  0 & \text{if } \ 0 < \abs{j} \leq n\\
  \prod_{l=1}^{n} (-\alpha + lc) & \text{if } \ j=n+1\\
  \prod_{l=1}^{n} (\alpha + lc) & \text{if } \ j=-n-1\\
  k_{j-n-1, n}\cdot\prod_{l=j-n}^{j-1} (-\alpha + lc) & \text{if } \ j \geq n+2\\
  k_{\,\abs{j}-n-1, n}\cdot\prod_{l=\,\abs{j}-n}^{\abs{j}-1} (\alpha + lc) & \text{if } \ j \leq -n-2
 \end{dcases}
 \]
where 
\begin{equation} \label{eq:k_i} 
k_{i, n}=\binom{n+i}{n}, \ i \in \mathbb{Z}_{>0}
\end{equation}
and 
 \[
 (v_{n})_{j\alpha} \coloneqq 
 \begin{dcases}
  0 & \text{if } \ 0 < \abs{j} \leq n-1\\
  0 & \text{if } \ j=n\\
  \prod_{l=0}^{n-1} (\alpha + lc) & \text{if } \ j=-n\\
  (a_{j-n, n}\alpha+(j-n)b_{j-n, n}c)\cdot\prod_{l=j-n+1}^{j-1} (-\alpha + lc) & \text{if } \ j \geq n+1\\
  b_{\,\abs{j}-n, n}\cdot\prod_{l=\,\abs{j}-n}^{\abs{j}-1} (\alpha + lc) & \text{if } \ j \leq -n-1
 \end{dcases}
 \]
where 
\begin{align} \label{eq:ab_i}
a_{i, n} &= \binom{n+i-1}{n}(n-1), \nonumber \\
b_{i, n} &= \binom{n+i-1}{n}\frac{i-(i-1)n}{i}, \ i \in \mathbb{Z}_{>0}. 
\end{align}
Then the elements $u_{n},\, n \in \mathbb{Z}_{\geq 0}$ and $v_{n},\, n \in \mathbb{Z}_{>0}$ belong to the structure algebra $\mathcal{Z}$. 
\end{prop}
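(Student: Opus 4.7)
The plan is to verify the defining condition of the structure algebra directly on each edge: for every edge $j\alpha\to j'\alpha$ of $\widehat{\mathcal{G}}^{\textnormal{stab}}_{>0\alpha}$ with label $\ell$, show that $(u_n)_{j\alpha}-(u_n)_{j'\alpha}\in \ell S$, and likewise for $v_n$. Since every label is an irreducible linear polynomial $\ell=\pm\alpha+mc\in S=\mathbb{C}[\alpha,c]$, this is equivalent to the equality of the two values after specializing $\alpha\mapsto \mp mc$. The whole argument thus reduces to comparing numerical evaluations of products of linear factors at prescribed points.

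Before starting the verification, I would make the edge structure of $\widehat{\mathcal{G}}^{\textnormal{stab}}_{>0\alpha}$ explicit. Using the minimal length representatives from Example~\ref{corlat} and right-multiplying them by the reflections $s_\beta$, $\beta\in\widehat{\Delta}^{\textnormal{re}}_+$, one sees that edges occur precisely between pairs of vertices of opposite signs: for every $j,k\in\mathbb{Z}_{>0}$ the pair $\{j\alpha,-k\alpha\}$ is joined by a single edge, oriented $j\alpha\to -k\alpha$ with label $\alpha+(k-j)c$ when $j\leq k$ and oriented $-k\alpha\to j\alpha$ with label $-\alpha+(j-k)c$ when $j>k$. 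Same-sign pairs of vertices are never joined, because the corresponding quotient in $\widehat{\mathcal{W}}$ turns out to be a translation rather than a reflection.

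For $u_n$ I would split into cases according to whether $|j|$ and $|k|$ exceed $n$. If both are $\leq n$, both components vanish and there is nothing to check. If exactly one is $\leq n$, say $|j|\leq n<|k|$, then the nonzero product contains the factor $\pm\alpha+(|k|-|j|)c$ matching the label, since the index $|k|-|j|$ always lies in the range of $l$-indices appearing in the product; hence the nonzero component vanishes at the specialization. If both are $>n$, the substitution turns each product into $c^n$ times a shifted factorial, and the required identity reduces to
\[\binom{j-1}{n}\frac{(k-1)!}{(k-n-1)!}=\binom{k-1}{n}\frac{(j-1)!}{(j-n-1)!},\]
which is transparent from the common expression $\frac{(j-1)!(k-1)!}{n!(j-n-1)!(k-n-1)!}$; the choice $k_{i,n}=\binom{n+i}{n}$ is exactly what enforces this.

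The argument for $v_n$ is parallel, but the positive-side leading factor is now the mixed expression $a_{i,n}\alpha+(j-n)b_{i,n}c$ rather than a scalar. After specialization, each edge yields a single scalar equation in the unknown coefficients; collecting these across all edges gives a linear system whose unique solution is exactly the formulas in~\eqref{eq:ab_i}, again reducible to manipulations of binomials and factorials. The main obstacle will be organizational rather than algebraic: the definitions of $u_n$ and $v_n$ are piecewise with several boundary indices $j\in\{\pm n,\pm(n+1)\}$ at which subcases meet, and the definition of $v_n$ is asymmetric between its positive and negative sides, so the case analysis must be carried out carefully to confirm that a single prescription of $(a_{i,n},b_{i,n})$ satisfies every divisibility relation simultaneously.
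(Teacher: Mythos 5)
Your proposal is correct and follows essentially the same route as the paper: identify the edges (all opposite-sign pairs $\{j\alpha,-k\alpha\}$ with the labels you state), reduce each structure-algebra condition to the vanishing of a difference of products under the specialization of $\alpha$ dictated by the irreducible linear label, and verify the resulting scalar identities by binomial/factorial manipulations --- your common expression $\frac{(j-1)!\,(k-1)!}{n!\,(j-n-1)!\,(k-n-1)!}$ is exactly the paper's $\binom{n+i}{n}\binom{n+i'}{n}n!\,c^{n}$ under $i=k-n-1$, $i'=j-n-1$. The only difference is one of completeness rather than of method: for $v_{n}$ you assert, but do not carry out, the check that the stated $a_{i,n},b_{i,n}$ satisfy every specialized equation (including the separately normalized $-n\alpha$ row), which is precisely the lengthier computation occupying the second half of the paper's proof.
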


\begin{rem}
The restricted and ``flipped'' elements $u_{n},\, n \in \mathbb{Z}_{\geq 0}$ resemble to the sections described in~\cite{Lan15}, Lemma 6.5 and Lemma 6.6, where the 
coefficients are given by falling factorials, while the coefficients of $u_{n}$ are given by binomial coefficients. 
\end{rem}

\begin{rem}
One can think of $u_{n}$ as

\[
u_{n}=\begin{pmatrix}
  \vdots & \vdots \\
  k_{3, n} \cdot \displaystyle\prod_{l=4}^{n+3} (-\alpha + lc) & k_{3, n} \cdot \displaystyle\prod_{l=4}^{n+3} (\alpha + lc) \\
  k_{2, n} \cdot \displaystyle\prod_{l=3}^{n+2} (-\alpha + lc) & k_{2, n} \cdot \displaystyle\prod_{l=3}^{n+2} (\alpha + lc) \\
  k_{1, n} \cdot \displaystyle\prod_{l=2}^{n+1} (-\alpha + lc) & k_{1, n} \cdot \displaystyle\prod_{l=2}^{n+1} (\alpha + lc) \\
  \displaystyle\prod_{l=1}^{n} (-\alpha + lc) & \displaystyle\prod_{l=1}^{n} (\alpha + lc) \\
  0 & 0 \\
  \vdots & \vdots \\
  0 & 0 \\
  0 & 0
 \end{pmatrix}
 \]
where the first $n \ (n \geq 0)$ rows are zero and $v_{n}$ as

\[
v_{n}=\begin{pmatrix}
  \vdots & \vdots \\
  (a_{3, n}\alpha+3b_{3, n}c) \cdot \displaystyle\prod_{l=4}^{n+2} (-\alpha + lc) & b_{3, n} \cdot \displaystyle\prod_{l=3}^{n+2} (\alpha + lc) \\
  (a_{2, n}\alpha+2b_{2, n}c) \cdot \displaystyle\prod_{l=3}^{n+1} (-\alpha + lc) & b_{2, n} \cdot \displaystyle\prod_{l=2}^{n+1} (\alpha + lc) \\
  (a_{1, n}\alpha+b_{1, n}c) \cdot \displaystyle\prod_{l=2}^{n} (-\alpha + lc) & b_{1, n} \cdot \displaystyle\prod_{l=1}^{n} (\alpha + lc) \\
  0 & \displaystyle\prod_{l=0}^{n-1} (\alpha + lc) \\
  0 & 0 \\
  \vdots & \vdots \\
  0 & 0 \\
  0 & 0
 \end{pmatrix}
 \]
where the first $n-1 \ (n \geq 1)$ rows are zero. This very convenient notation will be used throughout the rest of the article.
\end{rem}

\begin{proof}
First we prove that $u_{n},\, n \in \mathbb{Z}_{\geq 0}$ are elements of the structure algebra $\mathcal{Z}$. Let $n, i, i' \in \mathbb{Z}_{\geq 0}$ 
(Figure~\ref{fig:stable_edge}). If $i=i'$, then the corresponding structure algebra condition is satisfied, i.e. 
\[
\alpha \bigm\vert k_{i, n} \cdot \prod_{l=i+1}^{i+n} (-\alpha + lc) - k_{i, n} \cdot \prod_{l=i+1}^{i+n} (\alpha + lc),
\]
where we set $k_{0, n} \coloneqq 1$. If $i > i'$, we need to prove that 
\[
\alpha-(i'-i)c=\alpha+(i-i')c \bigm\vert k_{i, n} \cdot \prod_{l=i+1}^{i+n} (\alpha + lc) - k_{i', n} \cdot \prod_{l=i'+1}^{i'+n} (-\alpha + lc).
\]

\begin{figure}[!htb]
\centering
\def\svgwidth{0.65\textwidth}
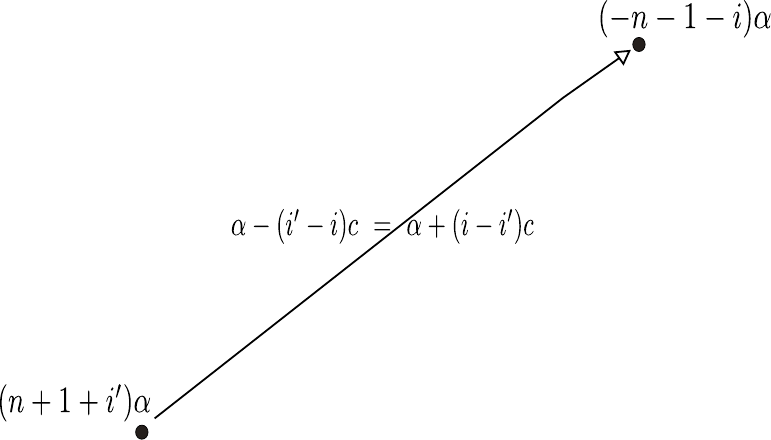
\caption{An edge of the stable moment graph of $\widehat{\mathfrak{sl}_{2}}$} \label{fig:stable_edge}
\end{figure}

It is enough to show that 
\begin{equation} \label{eq:zero}
k_{i, n} \cdot \prod_{l=i+1}^{i+n} ((i'-i)c + lc) - k_{i', n} \cdot \prod_{l=i'+1}^{i'+n} ((i-i')c + lc) = 0.
\end{equation}
We have
\begin{equation*}
\begin{split}
& k_{i, n} \cdot \prod_{l=i+1}^{i+n} ((i'-i)c + lc) = \binom{n+i}{n}(i'+1)(i'+2)\cdots(i'+n)c^{n}=\\
& =\binom{n+i}{n}\binom{n+i'}{n}n!c^{n}.
\end{split}
\end{equation*}
Observe that $\binom{n+i}{n}\binom{n+i'}{n}n!c^{n}$ is symmetric in $i$ and $i'$, so that~\eqref{eq:zero} follows immediately from the previous computation.

Now if $i < i'$, observe that the only thing that changes is a global sign, so that the divisibility relations to be satisfied are the same, and hence already proved. 
Regarding the differences $(u_{n})_{j\alpha}-(u_{n})_{j'\alpha}$ for $\abs{j} \leq n$ or $\abs{j'} \leq n$, let us note that the corresponding divisibility relations 
are obviously satisfied, and with this we have proved that $u_{n} \in \mathcal{Z}$ for every $n \in \mathbb{Z}_{\geq 0}$.

Now we prove that $v_{n},\, n \in \mathbb{Z}_{>0}$ are also elements of the structure algebra $\mathcal{Z}$. Let $n \in \mathbb{Z}_{>0}$. First, 
let $i' \in \mathbb{Z}_{>0}$ (Figure~\ref{fig:stable_edge_}). We need to prove that 
\[
-\alpha+i'c \bigm\vert (a_{i', n}\alpha+i'b_{i', n}c) \cdot \prod_{l=i'+1}^{i'+n-1} (-\alpha + lc) - \prod_{l=0}^{n-1} (\alpha + lc).
\]

\begin{figure}[!htb]
\centering
\def\svgwidth{0.525\textwidth}
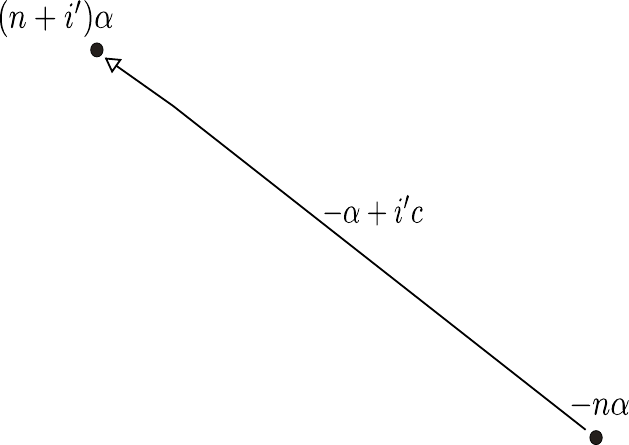
\caption{Another edge of the stable moment graph of $\widehat{\mathfrak{sl}_{2}}$} \label{fig:stable_edge_}
\end{figure}

It suffices to show that
\begin{equation} \label{eq:zeroo}
(a_{i', n}i'c+i'b_{i', n}c) \cdot \prod_{l=i'+1}^{i'+n-1} (-i'c + lc) - \prod_{l=0}^{n-1} (i'c + lc) = 0.
\end{equation}
We have
\begin{equation*}
\begin{split}
& (a_{i', n}i'c+i'b_{i', n}c) \cdot \prod_{l=i'+1}^{i'+n-1} (-i'c + lc) = i'(a_{i', n}+b_{i', n})(n-1)!c^{n}=\\
& = i'\bigg(\binom{n+i'-1}{n}(n-1)+\binom{n+i'-1}{n}\frac{i'-(i'-1)n}{i'}\bigg)(n-1)!c^{n}=\\
& = i'\binom{n+i'-1}{n}\bigg(n-1+\frac{i'-(i'-1)n}{i'}\bigg)(n-1)!c^{n}=\\
& = i'\binom{n+i'-1}{n}\frac{i'(n-1)+i'-(i'-1)n}{i'}(n-1)!c^{n}=\\
& = i'\binom{n+i'-1}{n}\frac{n}{i'}(n-1)!c^{n}=\binom{n+i'-1}{n}n!c^{n}=\\
& = i'(i'+1)(i'+2)\cdots(i'+n-1)c^{n} = \prod_{l=0}^{n-1} (i'c + lc).
\end{split} 
\end{equation*}
This proves~\eqref{eq:zeroo}.

Next, let $i, i' \in \mathbb{Z}_{>0}$. If $i=i'$, then
\[
\alpha \bigm\vert (a_{i, n}\alpha + ib_{i, n}c) \cdot \prod_{l=i+1}^{i+n-1} (-\alpha + lc) - b_{i, n} \cdot \prod_{l=i}^{i+n-1} (\alpha + lc),
\]
i.e. the corresponding structure algebra condition is satisfied. If $i' > i$, we need to prove that 
\[
-\alpha+(i'-i)c \bigm\vert (a_{i', n}\alpha+i'b_{i', n}c) \cdot \prod_{l=i'+1}^{i'+n-1} (-\alpha + lc) - b_{i, n} \cdot \prod_{l=i}^{i+n-1} (\alpha + lc).
\]

Again it is enough to show that
\begin{equation} \label{eq:zerooo}
(a_{i', n}(i'-i)c+i'b_{i', n}c) \cdot \prod_{l=i'+1}^{i'+n-1} ((i-i')c + lc) - b_{i, n} \cdot \prod_{l=i}^{i+n-1} ((i'-i)c + lc) = 0.
\end{equation}
We have
\begin{equation*}
\begin{split}
& (a_{i', n}(i'-i)c+i'b_{i', n}c) \cdot \prod_{l=i'+1}^{i'+n-1} ((i-i')c + lc) = \\
& = \bigg(\binom{n+i'-1}{n}(n-1)(i'-i)+i'\binom{n+i'-1}{n}\frac{i'-(i'-1)n}{i'}\bigg) \cdot \prod_{l=1}^{n-1}(i+l) \cdot \\
& \cdot c^{n} = \binom{n+i'-1}{n}\bigg((n-1)(i'-i)+i'-(i'-1)n\bigg) \cdot \prod_{l=1}^{n-1}(i+l) \cdot c^{n} = \\
& = \binom{n+i'-1}{n}\big(-ni+i+n\big) \cdot \prod_{l=1}^{n-1}(i+l) \cdot c^{n} = \\
& = \binom{n+i'-1}{n}\big(i-(i-1)n\big) \cdot \prod_{l=1}^{n-1}(i+l) \cdot c^{n}
\end{split}
\end{equation*}
and
\begin{equation*}
\begin{split}
& b_{i, n} \cdot \prod_{l=i}^{i+n-1} ((i'-i)c + lc) = \\
& = \binom{n+i-1}{n}\frac{i-(i-1)n}{i}i'(i'+1)\cdots(i'+n-1)c^{n}=\\
& = (n+i-1)(n+i-2)\cdots i \frac{i-(i-1)n}{i} \frac{i'(i'+1)\cdots(i'+n-1)}{n!}c^{n}=\\
& = \bigg(\prod_{l=1}^{n-1}(i+l)\bigg) \cdot \big(i-(i-1)n\big)\binom{n+i'-1}{n}c^{n},
\end{split}
\end{equation*}
so we have proved~\eqref{eq:zerooo}.

Last, if $i' < i$, as before, the only thing that changes is a global sign, therefore the divisibility relations to be satisfied are the same, and thus previously proved. 
Regarding the differences $(v_{n})_{j\alpha}-(v_{n})_{j'\alpha}$ for $\abs{j} \leq n-1$, $j=n$, $\abs{j'} \leq n-1$, or $j'=n$, notice that the corresponding divisibility 
relations are obviously satisfied, and with this the proof has been completed.
\end{proof}

\begin{prop} \label{bas}
The elements $u_{n},\, n \in \mathbb{Z}_{\geq 0}$ and $v_{n},\, n \in \mathbb{Z}_{>0}$ of the structure algebra $\mathcal{Z}$ from the Proposition~\textup{\ref{unvn}} 
form a basis of the $S$-module $\mathcal{Z}$.
\end{prop}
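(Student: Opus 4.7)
The plan is to exploit the total Bruhat order on the vertices obtained in Example~\ref{corlat}, namely $\alpha<-\alpha<2\alpha<-2\alpha<3\alpha<\cdots$, together with a strict triangular leading-term structure of the proposed basis. Under the bijection $u_{0}\leftrightarrow\alpha$, $v_{1}\leftrightarrow-\alpha$, $u_{1}\leftrightarrow 2\alpha$, $v_{2}\leftrightarrow-2\alpha,\ldots,u_{n}\leftrightarrow (n+1)\alpha$, $v_{n}\leftrightarrow-n\alpha$, each element vanishes at every vertex strictly below its partner and takes a nonzero polynomial value at the partner itself (namely $\prod_{l=1}^{n}(-\alpha+lc)$ for $u_{n}$ at $(n+1)\alpha$ and $\prod_{l=0}^{n-1}(\alpha+lc)$ for $v_{n}$ at $-n\alpha$). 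Crucially, at each fixed vertex only finitely many of the $u_{n},v_{n}$ are nonzero, so any $S$-linear combination $\sum a_{n}u_{n}+\sum b_{n}v_{n}$ is well defined componentwise in $\prod_{j\alpha\in\mathcal{V}}S$ and remains in $\mathcal{Z}$; "basis" is understood in this locally finite sense.

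For $S$-linear independence, assume $\sum a_{n}u_{n}+\sum b_{n}v_{n}=0$. Evaluating at $\alpha$, only $a_{0}(u_{0})_{\alpha}=a_{0}$ survives, so $a_{0}=0$. Evaluating the remainder at $-\alpha$ only $b_{1}(v_{1})_{-\alpha}=b_{1}\alpha$ is left, and since $S=\mathbb{C}[\alpha,c]$ is a domain we get $b_{1}=0$. Iterating along the chain $2\alpha,-2\alpha,3\alpha,\ldots$, each successive evaluation yields one equation of the form (coefficient)$\cdot$(nonzero element of $S$)$=0$, forcing all coefficients to vanish.

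For spanning, given $z\in\mathcal{Z}$ I would construct the coefficients recursively so as to kill the entries of $z$ in the chain order. Setting $a_{0}=z_{\alpha}$ annihilates the value at $\alpha$. Having produced $a_{0},b_{1},\ldots,a_{n}$ so that the current residue vanishes at $\pm\alpha,\pm2\alpha,\ldots,\pm n\alpha,(n+1)\alpha$, the next coefficient $b_{n+1}$ must satisfy $b_{n+1}\prod_{l=0}^{n}(\alpha+lc)=$(current value at $-(n+1)\alpha$). The main obstacle is therefore the divisibility of that value by $\prod_{l=0}^{n}(\alpha+lc)$. Using the edge-label formula of Example~\ref{corlat}, the edges from $j\alpha$ to $-(n+1)\alpha$ for $j=1,2,\ldots,n+1$ carry labels $\alpha+(n+1-j)c$, which is precisely the list $\alpha+lc$ for $l=0,1,\ldots,n$. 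Since the entries at these already-processed vertices are zero, the structure-algebra condition along each such edge forces the corresponding irreducible $\alpha+lc$ to divide the entry at $-(n+1)\alpha$; these $n+1$ factors are pairwise coprime in $S$, so their product divides. A mirror argument using the edges from $-k\alpha$, $k=1,\ldots,n+1$, to $(n+2)\alpha$, with labels $-\alpha+lc$ for $l=1,\ldots,n+1$, produces $a_{n+1}$. The resulting locally finite expansion $z=\sum a_{n}u_{n}+\sum b_{n}v_{n}$ is then the desired decomposition, and uniqueness follows from the linear independence established above.
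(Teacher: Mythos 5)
Your argument is correct and is essentially the paper's own proof: both are triangular (Gaussian-elimination) arguments that peel off the entries one vertex/row at a time using the echelon support of the $u_{n}$ and $v_{n}$, with the required divisibility of each residual entry by $\prod(\pm\alpha+lc)$ extracted from the section conditions along the edges joining it to the already-annihilated vertices. The only differences are organizational: the paper first reduces to homogeneous components and clears a whole row $(\pm(n+1)\alpha)$ per step via $xu_{n}+yv_{n+1}$, whereas you clear one vertex at a time along the Bruhat chain, spell out exactly which edge labels supply the coprime irreducible factors, and make the locally finite interpretation of ``basis'' explicit.
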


\begin{proof} Let us consider an arbitrary element $w$ of the structure algebra $\mathcal{Z}$ of the following form:
\[
w = \begin{pmatrix}
  \vdots & \vdots \\
  f & g \\
  0 & 0 \\
  \vdots & \vdots \\
  0 & 0 \\
  0 & 0
 \end{pmatrix}
 \]
where the first $n$ (for some $n \in \mathbb{Z}_{\geq 0}$) rows are zero and $f$ and $g$ are some elements of the algebra $S$. Since $\displaystyle\prod_{j\alpha \in 
\mathcal{V}(\mathcal{\widehat{G}}^{\text{stab}}_{>0\alpha})}\mkern-18mu S$ \; is a direct product of \emph{graded} algebras and the structure algebra conditions imply that
\begin{equation} \label{eq:sac}
 \prod_{l=1}^{n} (-\alpha + lc) \mid f, \quad \prod_{l=1}^{n} (\alpha + lc) \mid g, \quad \textrm{and} \quad \alpha \mid g-f,
\end{equation}
we may assume without loss of generality that $f$ and $g$ (and all other components) are homogeneous polynomials of degree $m$, for some non-negative integer $m \geq n$. 

If $m=n$, then 
\[
f = s \cdot \prod_{l=1}^{n} (-\alpha + lc), \quad g = s \cdot \prod_{l=1}^{n} (\alpha + lc),
\]
for some $s \in \mathbb{C}, s \neq 0$ (we assume that both $f$ and $g$ are not equal to zero), because of~\eqref{eq:sac}. After subtraction we obtain
\[
w - su_{n} = \begin{pmatrix}
  \vdots & \vdots \\
  \tilde{f} & \tilde{g} \\
  0 & 0 \\
  \vdots & \vdots \\
  0 & 0 \\
  0 & 0
 \end{pmatrix}
 \]
where the first $n+1$ rows are zero and $\tilde{f}$ and $\tilde{g}$ are some homogeneous polynomials of degree $m=n$. Since $\displaystyle\prod_{l=1}^{n+1} 
(-\alpha + lc) \mid \tilde{f}$ and $\displaystyle\prod_{l=1}^{n+1} (\alpha + lc) \mid \tilde{g}$, the polynomials $\tilde{f}$ and $\tilde{g}$ must be equal to zero. 
The same is true for all other components, i.e. $w = su_{n}$. (If $f$ were equal to zero, we would consider $v_{n+1}$ instead of $u_{n}$ and similarly, if $g$ 
were equal to zero, we would consider $\alpha u_{n} - v_{n+1}$ instead of $u_{n}$.)

Now if $m > n$, because we again have~\eqref{eq:sac}, it follows that
\[
\frac{g \cdot \displaystyle\prod_{l=1}^{n} (-\alpha + lc) - f \cdot \displaystyle\prod_{l=1}^{n} (\alpha + lc)}{\displaystyle\prod_{l=1}^{n} (\alpha + lc) \cdot 
\displaystyle\prod_{l=1}^{n} (-\alpha + lc)} = \frac{g}{\displaystyle\prod_{l=1}^{n} (\alpha + lc)} - 
\frac{f}{\displaystyle\prod_{l=1}^{n} (-\alpha + lc)}
\]
is a polynomial in $S$, i.e.
\[
\prod_{l=1}^{n} (\alpha + lc) \cdot \prod_{l=1}^{n} (-\alpha + lc) \bigm\vert g \cdot \prod_{l=1}^{n} (-\alpha + lc) - 
f \cdot \prod_{l=1}^{n} (\alpha + lc)
\]
and 
\[
\alpha \bigm\vert g \cdot \prod_{l=1}^{n} (-\alpha + lc) - f \cdot \prod_{l=1}^{n} (\alpha + lc).
\]
Therefore we can define the following homogeneous polynomials:
\[x \coloneqq \frac{f}{\displaystyle\prod_{l=1}^{n} (-\alpha + lc)}\]
and
\[
y \coloneqq \frac{g \cdot \displaystyle\prod_{l=1}^{n} (-\alpha + lc) - f \cdot \displaystyle\prod_{l=1}^{n} (\alpha + lc)}{\alpha \cdot 
\displaystyle\prod_{l=1}^{n} (\alpha + lc) \cdot \displaystyle\prod_{l=1}^{n} (-\alpha + lc)} = \frac{1}{\alpha} \Bigg(\frac{g}{\displaystyle\prod_{l=1}^{n} 
(\alpha + lc)} - \frac{f}{\displaystyle\prod_{l=1}^{n} (-\alpha + lc)}\Bigg).
\]
Now we have
\begin{equation*}
\begin{split}
xu_{n} + yv_{n+1} &= \\
 = x \begin{pmatrix}
  \vdots & \vdots \\
  \displaystyle\prod_{l=1}^{n} (-\alpha + lc) & \displaystyle\prod_{l=1}^{n} (\alpha + lc) \\
  0 & 0 \\
  \vdots & \vdots \\
  0 & 0 \\
  0 & 0
 \end{pmatrix} &+ y \begin{pmatrix}
  \vdots & \vdots \\
  0 & \displaystyle\prod_{l=0}^{n} (\alpha + lc) \\
  0 & 0 \\
  \vdots & \vdots \\
  0 & 0 \\
  0 & 0
 \end{pmatrix} 
= \begin{pmatrix}
  \vdots & \vdots \\
  f & g \\
  0 & 0 \\
  \vdots & \vdots \\
  0 & 0 \\
  0 & 0
 \end{pmatrix}.
\end{split}
\end{equation*}
If the obtained element of the structure algebra $\mathcal{Z}$ differs from $w$, then we apply the same procedure to $w - (xu_{n} + yv_{n+1})$. We stop this procedure 
when we obtain the element that has $m$ zero rows, i.e. the degree of the components and the number of zero rows coincide and that brings us to the above case.

Hence we have shown that the set $\{ u_{n} \mid n \in \mathbb{Z}_{\geq 0} \} \cup \{ v_{n} \mid n \in \mathbb{Z}_{>0} \}$ generates the $S$-module $\mathcal{Z}$. Since 
the linear independence is obvious, we conclude that the set $\{ u_{n} \mid n \in \mathbb{Z}_{\geq 0} \} \cup \{ v_{n} \mid n \in \mathbb{Z}_{>0} \}$ is indeed a basis 
of the $S$-module $\mathcal{Z}$.
\end{proof}

\subsection{``Setting \emph{c} equal to zero''.} \label{subsec:ctozero} We denote by $\mathcal{G}^{\text{fin}}$ the moment graph associated with the finite root system 
$A_{1}$, i.e. the root system $\{ \alpha, -\alpha \}$ (cf.~\cite{Fie16}). This is the graph with two vertices that are connected by one edge labeled with $\alpha$. Then 
we denote by $S^{\text{fin}}=\mathbb{C}[\alpha]$ the corresponding symmetric algebra and by $\mathcal{Z}^{\text{fin}}$ the structure algebra of $\mathcal{G}^{\text{fin}}$, 
i.e.
\[
\mathcal{Z}^{\text{fin}} = \bigg\{ (z_{1}, z_{2}) \biggm\vert z_{1}, z_{2} \in S^{\text{fin}}, \ \alpha \mid z_{1}-z_{2} \bigg\}.
\] 

We define now the $S^{\text{fin}}$-module $\mathcal{Z}_{c=0}$ by ``setting $c$ equal to zero'' in the $S$-module $\mathcal{Z}$ in the following way. 

For $f \in S=\mathbb{C}[\alpha, c]$ we define $\overline{f} \coloneqq f\bigr\vert_{c=0} \in S^{\text{fin}}=\mathbb{C}[\alpha]$. Then 
\[
\mathcal{Z}_{c=0} \coloneqq \bigg\{ (\overline{z_{k\alpha}})_{k \in \mathbb{Z}\setminus\{0\}}  \biggm\vert (z_{k\alpha})_{k \in \mathbb{Z}\setminus\{0\}} \in 
\mathcal{Z} \bigg\}.
\]
The elements $\overline{u_{n}} \coloneqq (\overline{(u_{n})_{k\alpha}})_{k \in \mathbb{Z}\setminus\{0\}}, \, n \in \mathbb{Z}_{\geq 0}$ and 
$\overline{v_{n}} \coloneqq (\overline{(v_{n})_{k\alpha}})_{k \in \mathbb{Z}\setminus\{0\}}, \, n \in \mathbb{Z}_{>0}$ form a basis of the $S^{\text{fin}}$-module 
$\mathcal{Z}_{c=0}$.

Indeed, because $u_{n},\, n \in \mathbb{Z}_{\geq 0}$ and $v_{n},\, n \in \mathbb{Z}_{>0}$ are generators of the $S$-module $\mathcal{Z}$, 
$\overline{u_{n}},\, n \in \mathbb{Z}_{\geq 0}$ and $\overline{v_{n}},\, n \in \mathbb{Z}_{>0}$ are generators of the $S^{\text{fin}}$-module $\mathcal{Z}_{c=0}$. 
Furthermore, observe that ``the row echelon form'' of $u_{n},\, n \in \mathbb{Z}_{\geq 0}$ and $v_{n},\, n \in \mathbb{Z}_{>0}$ is preserved under ``setting $c$ equal 
to zero'', which means that $\overline{u_{n}},\, n \in \mathbb{Z}_{\geq 0}$ and $\overline{v_{n}},\, n \in \mathbb{Z}_{>0}$ are of the same form. For this reason, 
$\overline{u_{n}},\, n \in \mathbb{Z}_{\geq 0}$ and $\overline{v_{n}},\, n \in \mathbb{Z}_{>0}$ are $S^{\text{fin}}$-linearly independent in $\mathcal{Z}_{c=0}$. 

Let $\overline{w}=(\overline{w_{k\alpha}})_{k \in \mathbb{Z}\setminus\{0\}}$ be an arbitrary element of $\mathcal{Z}_{c=0}$, i.e.
\[
\overline{w} = \begin{pmatrix}
  \vdots & \vdots \\
  \overline{w_{k\alpha}} & \overline{w_{-k\alpha}} \\
  \vdots & \vdots \\
  \overline{w_{2\alpha}} & \overline{w_{-2\alpha}} \\
  \overline{w_{\alpha}} & \overline{w_{-\alpha}}
 \end{pmatrix}.
\]
We regard the rows of $\overline{w}$ as ordered pairs:
\[
\mathbf{a}_{j} \coloneqq (\overline{w_{(j+1)\alpha}}, \overline{w_{(-j-1)\alpha}}), \ j \in \mathbb{Z}_{\geq 0}.
\]
Observe that $\mathbf{a}_{j} \in \mathcal{Z}^{\text{fin}}$ for all non-negative integers $j$ as $\overline{w_{(j+1)\alpha}}-\overline{w_{(-j-1)\alpha}}$ is divisible 
by $\alpha$. Now we have the following theorem.

\begin{thm} \label{main}
\begin{equation}  \label{eq:divisibility}
\mathcal{Z}_{c=0} = \Bigg\{ (\mathbf{a}_{j}) \in \prod_{j \in \mathbb{Z}_{\geq 0}} \mathcal{Z}^{\textnormal{fin}}
                         \biggm\vert \ \sum_{j=0}^{m} (-1)^{j} \binom{m}{j} \mathbf{a}_{j} \in ((-\alpha)^{m}, \alpha^{m}) 
                         \mathcal{Z}^{\textnormal{fin}} \ \forall m \in \mathbb{Z}_{\geq 0} \Bigg\}.
\end{equation}                        
\end{thm}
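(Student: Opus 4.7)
The plan is to prove the asserted equality by two separate inclusions. Denote the right-hand side by $R$, an $S^{\textnormal{fin}}$-submodule of $\prod_{j \geq 0} \mathcal{Z}^{\textnormal{fin}}$ (it is a submodule since each divisibility condition is $S^{\textnormal{fin}}$-linear in $(\mathbf{a}_j)$), and abbreviate $\phi_m(\mathbf{a}) := \sum_{j=0}^m (-1)^j \binom{m}{j}\mathbf{a}_j$.

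For the inclusion $\mathcal{Z}_{c=0} \subseteq R$ I would use that $\{\overline{u_n}\}_{n \geq 0} \cup \{\overline{v_n}\}_{n \geq 1}$ is an $S^{\textnormal{fin}}$-basis of $\mathcal{Z}_{c=0}$ (established at the end of Subsection~\ref{subsec:ctozero}), so by linearity it suffices to check the congruence on each basis vector. Setting $c = 0$ in Proposition~\ref{unvn} yields the clean shape $\mathbf{a}_j(\overline{u_n}) = \binom{j}{n}((-\alpha)^n, \alpha^n)$ for $j \geq n$ and zero otherwise, so the classical identity $\sum_{j=n}^m (-1)^j \binom{m}{j}\binom{j}{n} = (-1)^n \delta_{m,n}$ gives $\phi_m(\overline{u_n}) = (-1)^n \delta_{m,n} ((-\alpha)^n, \alpha^n) \in ((-\alpha)^m,\alpha^m)\mathcal{Z}^{\textnormal{fin}}$. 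The verification for $\overline{v_n}$ is more delicate: decomposing $\frac{i - (i-1)n}{i} = (1 - n) + \frac{n}{i}$ and using $\binom{n+i-1}{n}\frac{n}{i} = \binom{n+i-1}{n-1}$ rewrites $b_{i,n} = (1-n)\binom{n+i-1}{n} + \binom{n+i-1}{n-1}$, which splits the relevant sum into two pieces each handled by the same classical identity. The upshot is that $\phi_m(\overline{v_n})$ vanishes except at $m \in \{n-1, n\}$, where it takes the values $(0, (-1)^{n-1}\alpha^n)$ and $(n-1)(-\alpha^n, (-1)^{n+1}\alpha^n)$ respectively, both of which visibly lie in the appropriate module.

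For the reverse inclusion $R \subseteq \mathcal{Z}_{c=0}$ I would use an iterative ``peeling'' procedure modeled on the proof of Proposition~\ref{bas}. Given $(\mathbf{a}_j) \in R$, suppose inductively that $c_0, \ldots, c_{n-1}$ and $d_1, \ldots, d_n$ in $S^{\textnormal{fin}}$ have been chosen so that the residual tuple $(\mathbf{a}_j') := (\mathbf{a}_j) - \sum_{k<n} c_k \overline{u_k} - \sum_{k \leq n} d_k \overline{v_k}$ vanishes for $j < n$ and still belongs to $R$ (the latter by the first inclusion and $S^{\textnormal{fin}}$-linearity). Since $\phi_n((\mathbf{a}_j')) = (-1)^n \mathbf{a}_n'$, the $m = n$ condition forces $\mathbf{a}_n' = ((-\alpha)^n z_1, \alpha^n z_2)$ with $\alpha \mid z_1 - z_2$; setting $c_n := z_1$, $d_{n+1} := (z_2 - z_1)/\alpha$, the leading-position values $\mathbf{a}_n(\overline{u_n}) = ((-\alpha)^n, \alpha^n)$ and $\mathbf{a}_n(\overline{v_{n+1}}) = (0, \alpha^{n+1})$ make the updated residual vanish at $j = n$ as well. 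Iterating yields a locally finite expansion $(\mathbf{a}_j) = \sum_{n\geq 0} c_n \overline{u_n} + \sum_{n \geq 1} d_n \overline{v_n}$; lifting each $c_n, d_n$ along $S^{\textnormal{fin}} \hookrightarrow S$ produces a bona fide section $\sum c_n u_n + \sum d_n v_n \in \mathcal{Z}$ (well-defined because each vertex receives only finitely many nonzero contributions), whose reduction modulo $c$ is $(\mathbf{a}_j)$, placing it in $\mathcal{Z}_{c=0}$.

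The main obstacle I anticipate is the verification that $\phi_m(\overline{v_n}) = 0$ for all $m \geq n + 1$, which unlike the corresponding fact for $\overline{u_n}$ is not a single classical identity: one must reconcile the boundary contribution at $j = n - 1$, namely $(-1)^{n-1}\binom{m}{n-1}\alpha^n$, with the sum over $j \geq n$ after the decomposition of $b_{i,n}$. Should this bookkeeping become unwieldy, a more conceptual alternative would be to derive the higher-order congruences directly from the edge relations of $\mathcal{Z}$, whose labels all have the form $\pm\alpha + Nc$, by Taylor-expanding in powers of $c$ and extracting the coefficient relations at each order; this would potentially yield both inclusions simultaneously.
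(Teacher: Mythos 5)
Your proposal is correct and follows essentially the same route as the paper: both inclusions are established by verifying the congruences on the basis $\{\overline{u_{n}}\}\cup\{\overline{v_{n}}\}$ (via the subset-of-a-subset identity and alternating binomial sums) and by a Gaussian-elimination/peeling argument for the reverse containment. Your decomposition $b_{i,n}=(1-n)\binom{n+i-1}{n}+\binom{n+i-1}{n-1}$ is a slightly cleaner way to organize the paper's hardest computation (the vanishing of $\phi_{m}(\overline{v_{n}})$ for $m\geq n+1$, where the paper instead evaluates $\sum_{l}(-1)^{l}\binom{k}{l}\tfrac{l}{l+1}$ directly), and it does reconcile correctly with the boundary term $(-1)^{n-1}\binom{m}{n-1}$.
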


\begin{proof}
In order to prove that $\mathcal{Z}_{c=0}$ is a subset of the set given on the right-hand side, it is enough to show that the basis elements 
$\overline{u_{n}}, \, n \in \mathbb{Z}_{\geq 0}$ and $\overline{v_{n}}, \, n \in \mathbb{Z}_{>0}$ of the $S^{\text{fin}}$-module $\mathcal{Z}_{c=0}$ 
satisfy divisibility relations.

Let 
\[
\overline{u_{n}} = \begin{pmatrix}
  \vdots & \vdots \\
  k_{3, n}(-\alpha)^{n} & k_{3, n}\alpha^{n} \\
  k_{2, n}(-\alpha)^{n} & k_{2, n}\alpha^{n} \\
  k_{1, n}(-\alpha)^{n} & k_{1, n}\alpha^{n} \\
  (-\alpha)^{n} & \alpha^{n} \\
  0 & 0 \\
  \vdots & \vdots \\
  0 & 0 \\
  0 & 0
 \end{pmatrix}
 \]
be a basis element of the $S^{\text{fin}}$-module $\mathcal{Z}_{c=0}$, where the first $n$ rows are zero for an arbitrary $n \in \mathbb{Z}_{\geq 0}$ and 
$k_{i, n}, i \in \mathbb{Z}_{>0}$ are as in~\eqref{eq:k_i}.

If $m \leq n$, then the corresponding divisibility relation is obviously satisfied. 

Therefore let us assume that $m=n+k$ for some $k \in \mathbb{Z}_{>0}$.
We want to show that $((-\alpha)^{n+k}, \alpha^{n+k})$ divides \[ \sum_{j=0}^{n+k} (-1)^{j} \binom{n+k}{j} \mathbf{a}_{j}. \]
The sum above equals to 
\[
\sum_{j=n}^{n+k} (-1)^{j} \binom{n+k}{j} \binom{j}{n} ((-\alpha)^{n}, \alpha^{n}), 
\]
so we need to prove that
\begin{equation} \label{eq:bnmzero}
\sum_{j=n}^{n+k} (-1)^{j} \binom{n+k}{j} \binom{j}{n} = 0.
\end{equation}
We have
\begin{equation*}
\begin{split}
& \sum_{l=0}^{k} (-1)^{n+l} \binom{n+k}{n+l} \binom{n+l}{n} = \sum_{l=0}^{k} (-1)^{n+l} \binom{n+k}{n} \binom{k}{l} = \\
& = (-1)^{n} \binom{n+k}{n} \underbrace{\sum_{l=0}^{k} (-1)^{l} \binom{k}{l}}_{=0} = 0, 
\end{split}
\end{equation*}
which proves~\eqref{eq:bnmzero}.

Now let 
\[
\overline{v_{n}} = \begin{pmatrix}
  \vdots & \vdots \\
  -a_{3, n}(-\alpha)^{n} & b_{3, n}\alpha^{n} \\
  -a_{2, n}(-\alpha)^{n} & b_{2, n}\alpha^{n} \\
  -a_{1, n}(-\alpha)^{n} & b_{1, n}\alpha^{n} \\
  0 & \alpha^{n} \\
  0 & 0 \\
  \vdots & \vdots \\
  0 & 0 \\
  0 & 0
 \end{pmatrix}
 \]
be a basis element of the $S^{\text{fin}}$-module $\mathcal{Z}_{c=0}$, where the first $n-1$ rows are zero for an arbitrary $n \in \mathbb{Z}_{>0}$ and 
$a_{i, n}, b_{i, n}, i \in \mathbb{Z}_{>0}$ are as in~\eqref{eq:ab_i}.

If $m \leq n-2$, then the corresponding divisibility relation is trivially satisfied.

If $m = n-1$, then we have
\[
\sum_{j=0}^{n-1} (-1)^{j} \binom{n-1}{j} \mathbf{a}_{j} = (-1)^{n-1} (0, \alpha^{n}) = \underbrace{(0, (-1)^{n-1}\alpha)}_{\in \mathcal{Z}^{\text{fin}}}
((-\alpha)^{n-1}, \alpha^{n-1}). 
\]

If $m = n$, we have
\begin{equation*}
\begin{split}
& \sum_{j=0}^{n} (-1)^{j} \binom{n}{j} \mathbf{a}_{j} = (-1)^{n-1} \binom{n}{n-1} (0, \alpha^{n}) + (-1)^{n} (-a_{1, n}(-\alpha)^{n}, b_{1, n}\alpha^{n}) = \\
& = \Big(0, (-1)^{n-1}n \alpha^{n}\Big) + \Big((-1)^{n-1}(n-1) (-\alpha)^{n}, (-1)^{n} \alpha^{n}\Big) = \\
& = \underbrace{\Big((-1)^{n-1}(n-1), (-1)^{n-1}n + (-1)^{n}\Big)}_{\in \mathcal{Z}^{\text{fin}}}\Big((-\alpha)^{n}, \alpha^{n}\Big).
\end{split}
\end{equation*}

Now let $m=n+k$ for some $k \in \mathbb{Z}_{>0}$. Then
\begin{equation*}
\begin{split}
\sum_{j=0}^{n+k} (-1)^{j} \binom{n+k}{j} \mathbf{a}_{j} &= \\
= (-1)^{n-1} \binom{n+k}{n-1} (0, \alpha^{n}) &+ \sum_{j=n}^{n+k} (-1)^{j} \binom{n+k}{j} (-a_{j-n+1, n}(-\alpha)^{n}, b_{j-n+1, n}\alpha^{n}),
\end{split}
\end{equation*}
hence we need to show that 
\[ (-1)^{n-1} \binom{n+k}{n-1} \!(0, \alpha^{n}) + \sum_{j=n}^{n+k} (-1)^{j} \binom{n+k}{j} \!(-a_{j-n+1, n}(-\alpha)^{n}, b_{j-n+1, n}\alpha^{n}) = 0, \]
i.e. 
\begin{equation} \label{eq:bnmzeroo}
\sum_{j=n}^{n+k} (-1)^{j} \binom{n+k}{j} a_{j-n+1, n} = 0
\end{equation}
and 
\begin{equation} \label{eq:bnmzerooo}
(-1)^{n-1}\binom{n+k}{n-1} +  \sum_{j=n}^{n+k} (-1)^{j} \binom{n+k}{j} b_{j-n+1, n} = 0.
\end{equation} 

The left-hand side of~\eqref{eq:bnmzeroo} equals
\begin{equation*}
\begin{split}
& \sum_{j=n}^{n+k} (-1)^{j} \binom{n+k}{j} \binom{j}{n} (n-1) = \\
& = (n-1) \sum_{l=0}^{k} (-1)^{n+l} \binom{n+k}{n+l} \binom{n+l}{n} = \\
& = (-1)^{n}(n-1)\binom{n+k}{n} \underbrace{\sum_{l=0}^{k} (-1)^{l} \binom{k}{l}}_{=0} = 0. 
\end{split} 
\end{equation*}
Therefore~\eqref{eq:bnmzeroo} is proved.

Then on the left-hand side of~\eqref{eq:bnmzerooo} we have the following:
\begin{equation} \label{eq:sum_bi}
\begin{split}
& (-1)^{n-1}\binom{n+k}{n-1} + \sum_{j=n}^{n+k} (-1)^{j} \binom{n+k}{j} \binom{j}{n} \frac{(j-n+1)-(j-n)n}{j-n+1} = \\
& = (-1)^{n-1}\binom{n+k}{n-1} + \sum_{j=n}^{n+k} (-1)^{j} \binom{n+k}{j} \binom{j}{n} \bigg( 1 - \frac{j-n}{j-n+1} n \bigg) = \\
& = (-1)^{n-1}\binom{n+k}{n-1} + \sum_{l_{1}=0}^{k} (-1)^{n+l_{1}} \binom{n+k}{n+l_{1}} \binom{n+l_{1}}{n} - \\
& - \sum_{l_{2}=0}^{k} (-1)^{n+l_{2}} \binom{n+k}{n+l_{2}} \binom{n+l_{2}}{n} \frac{l_{2}}{l_{2}+1} n = \\
& = (-1)^{n-1}\binom{n+k}{n-1} + (-1)^{n}\binom{n+k}{n} \underbrace{\sum_{l_{1}=0}^{k} (-1)^{l_{1}}\binom{k}{l_{1}}}_{=0} - \\ 
& - (-1)^{n}n\binom{n+k}{n} \underbrace{\sum_{l_{2}=0}^{k} (-1)^{l_{2}}\binom{k}{l_{2}}\frac{l_{2}}{l_{2}+1}}_\text{(\textbullet)}.
\end{split}
\end{equation}

Now we calculate the sum (\textbullet).
\begin{equation*}
\begin{split}
& \sum_{l_{2}=0}^{k} (-1)^{l_{2}}\binom{k}{l_{2}}\frac{l_{2}}{l_{2}+1} = \sum_{l_{2}=0}^{k} (-1)^{l_{2}}\binom{k+1}{l_{2}+1}\frac{l_{2}}{k+1} =\\
& = -\frac{1}{k+1} \sum_{l_{2}=0}^{k} (-1)^{l_{2}+1}\binom{k+1}{l_{2}+1}l_{2} = -\frac{1}{k+1} \sum_{l_{3}=1}^{k+1} (-1)^{l_{3}}\binom{k+1}{l_{3}}(l_{3}-1) =\\
& = -\frac{1}{k+1} \Bigg( \underbrace{\sum_{l_{3}=1}^{k+1} (-1)^{l_{3}}\binom{k+1}{l_{3}}l_{3}}_\text{(\textbullet \textbullet)} - 
\underbrace{\sum_{l_{3}=1}^{k+1} (-1)^{l_{3}}\binom{k+1}{l_{3}}}_\text{(\textbullet \textbullet \textbullet)} \Bigg).
\end{split}
\end{equation*}
The sum (\textbullet \textbullet) is equal to
\begin{equation*}
\begin{split}
& \sum_{l_{3}=1}^{k+1} (-1)^{l_{3}}(k+1)\binom{k}{l_{3}-1} = (k+1) \sum_{l_{4}=0}^{k} (-1)^{l_{4}+1} \binom{k}{l_{4}} = \\
& = -(k+1) \underbrace{\sum_{l_{4}=0}^{k} (-1)^{l_{4}} \binom{k}{l_{4}}}_{=0} = 0
\end{split}
\end{equation*}
and the sum (\textbullet \textbullet \textbullet) is equal to
\[
\underbrace{\sum_{l_{3}=0}^{k+1} (-1)^{l_{3}}\binom{k+1}{l_{3}}}_{=0} - 1 = -1.
\]
Therefore the sum (\textbullet) equals $-\dfrac{1}{k+1}$ and we have in~\eqref{eq:sum_bi} the following:
\begin{equation*}
\begin{split}
& (-1)^{n-1}\binom{n+k}{n-1} + (-1)^{n}n\binom{n+k}{n}\frac{1}{k+1} = \\
& = (-1)^{n-1}\binom{n+k}{n-1} + (-1)^{n}\binom{n+k}{n-1} = \\
& = \binom{n+k}{n-1} \underbrace{\Bigg((-1)^{n-1}+(-1)^{n}\Bigg)}_{=0} = 0. 
\end{split}
\end{equation*}
Thus we have proved~\eqref{eq:bnmzerooo}.

Finally, to prove that the $S^{\text{fin}}$-module given on the right-hand side of~\eqref{eq:divisibility} is included in $\mathcal{Z}_{c=0}$, we will show 
that the basis elements $\overline{u_{n}}, \, n \in \mathbb{Z}_{\geq 0}$ and $\overline{v_{n}}, \, n \in \mathbb{Z}_{>0}$ of $\mathcal{Z}_{c=0}$ generate that 
$S^{\text{fin}}$-module (by performing a procedure similar to Gaussian elimination).

For $n \geq 1$, because of the grading on the direct product $\mkern-15mu\displaystyle\prod_{j\alpha \in \mathcal{V}(\mathcal{\widehat{G}}^{\text{stab}}_{>0\alpha})}
\mkern-18mu S$\, (and on the direct product $\mkern-15mu\displaystyle\prod_{j\alpha \in \mathcal{V}(\mathcal{\widehat{G}}^{\text{stab}}_{>0\alpha})}\mkern-18mu 
S^{\text{fin}}$\,), it is enough to consider homogeneous elements, hence of the following form:
\[
 \begin{pmatrix}
  \vdots & \vdots \\
  x_{2}\alpha^{n} & y_{2}\alpha^{n} \\
  x_{1}\alpha^{n} & y_{1}\alpha^{n} \\
  x_{0}\alpha^{n} & y_{0}\alpha^{n}
 \end{pmatrix}
 \]
where $x_{i}, y_{i} \in \mathbb{C}, i \in \mathbb{Z}_{\geq 0}$.
We have the following:
\[
\begin{pmatrix}
  \vdots & \vdots \\
  x_{2}\alpha^{n} & y_{2}\alpha^{n} \\
  x_{1}\alpha^{n} & y_{1}\alpha^{n} \\
  x_{0}\alpha^{n} & y_{0}\alpha^{n}
\end{pmatrix}
 - x_{0}\alpha^{n}
\underbrace{\begin{pmatrix}
  \vdots & \vdots \\
  1 & 1 \\
  1 & 1 \\
  1 & 1
 \end{pmatrix}}_{=\overline{u_{0}}}
 =
\begin{pmatrix}
  \vdots & \vdots \\
  (x_{2}-x_{0})\alpha^{n} & (y_{2}-x_{0})\alpha^{n} \\
  (x_{1}-x_{0})\alpha^{n} & (y_{1}-x_{0})\alpha^{n} \\
  0 & (y_{0}-x_{0})\alpha^{n}
 \end{pmatrix}
\] 
\begin{equation*}
\begin{split}
\begin{pmatrix}
  \vdots & \vdots \\
  (x_{2}-x_{0})\alpha^{n} & (y_{2}-x_{0})\alpha^{n} \\
  (x_{1}-x_{0})\alpha^{n} & (y_{1}-x_{0})\alpha^{n} \\
  0 & (y_{0}-x_{0})\alpha^{n}
 \end{pmatrix}
 - (y_{0}-x_{0})\alpha^{n-1}
\underbrace{\begin{pmatrix}
  \vdots & \vdots \\
  0 & \alpha \\
  0 & \alpha \\
  0 & \alpha
 \end{pmatrix}}_{=\overline{v_{1}}}
 = \\
= \begin{pmatrix}
  \vdots & \vdots \\
  (x_{2}-x_{0})\alpha^{n} & (y_{2}-y_{0})\alpha^{n} \\
  (x_{1}-x_{0})\alpha^{n} & (y_{1}-y_{0})\alpha^{n} \\
  0 & 0
 \end{pmatrix}
\end{split}
\end{equation*}

\[ \vdots \]
The procedure stops at:
\[
\begin{pmatrix}
  \vdots & \vdots \\
  z_{2}\alpha^{n} & z_{3}\alpha^{n} \\
  0 & z_{1}\alpha^{n} \\
  0 & 0 \\
  \vdots & \vdots \\
  0 & 0
\end{pmatrix}
\]
where the first $n$ rows are zero and $z_{i} \in \mathbb{C}, \ i \in \mathbb{Z}_{>0}$. Since $\displaystyle \sum_{j=0}^{n} (-1)^{j} \binom{n}{j} \mathbf{a}_{j} 
\in ((-\alpha)^{n}, \alpha^{n})\mathcal{Z}^{\text{fin}}$, it follows that
\[ (-1)^{n}(0, z_{1}\alpha^{n}) = \underbrace{(u,u)}_{\in \mathcal{Z}^{\text{fin}}}((-\alpha)^{n}, \alpha^{n}) \ \text{ for some } u \in \mathbb{C}. \] 
Therefore $z_{1}$ is equal to zero. By applying the divisibility relations for $m \geq n+1$, we conclude that all rows are zero.

For $n=0$, let 
\[
 \begin{pmatrix}
  \vdots & \vdots \\
  c_{2} & c_{2} \\
  c_{1} & c_{1} \\
  c_{0} & c_{0}
 \end{pmatrix},
 \quad
 c_{i} \in \mathbb{C}, \ i \in \mathbb{Z}_{\geq 0}
 \]
be an arbitrary element of the $S^{\text{fin}}$-module on the right-hand side of~\eqref{eq:divisibility}. Then
\[ 
\begin{pmatrix}
  \vdots & \vdots \\
  c_{2} & c_{2} \\
  c_{1} & c_{1} \\
  c_{0} & c_{0}
 \end{pmatrix}
 - c_{0}
\underbrace{\begin{pmatrix}
  \vdots & \vdots \\
  1 & 1 \\
  1 & 1 \\
  1 & 1
 \end{pmatrix}}_{=\overline{u_{0}}}
 =
\begin{pmatrix}
  \vdots & \vdots \\
  c_{2}-c_{0} & c_{2}-c_{0} \\
  c_{1}-c_{0} & c_{1}-c_{0} \\
  0 & 0
 \end{pmatrix} 
\]
and again by applying the divisibility relations for $m \geq 1$, we obtain \[ c_{i}=c_{0} \quad \forall i \in \mathbb{Z}_{>0}. \] 
This completes the proof.
\end{proof}

\begin{rem}
The structure algebra $\mathcal{Z}$ is an $S$-\emph{algebra} and the $S^{\text{fin}}$-module $\mathcal{Z}_{c=0}$ inherits the algebra structure. Therefore 
$\mathcal{Z}_{c=0}$ is an $S^{\text{fin}}$-\emph{algebra}, where the multiplication is given componentwise. This algebra structure on $\mathcal{Z}_{c=0}$ 
implies that higher-order congruence relations from Theorem~\ref{main} are closed under multiplication. This property is far from obvious. 
\end{rem}

\section{ACKNOWLEDGEMENTS} 

I would like to thank Peter Fiebig for helpful discussions and interesting ideas about sheaves on moment graphs. This article was partially written when the author 
held a postdoctoral position funded by the research project BIRD179758/17 of the University of Padova. This research was also partially funded by DOR1717189/17 of the 
same university. In addition, this research was supported by the DFG Priority Programme 1388 and the project PRG34672 funded by the MIUR-DAAD Programme 2018/2019.


\begin{thebibliography}{GKM98}

\bibitem[BB81]{BB81}
A. Beilinson and J. Bernstein, Localisation de $\mathfrak{g}$-modules, \emph{C. R. Acad. Sci. Paris S\'{e}r. I Math.} \textbf{292} (1981), no. 1, 15\textendash18.

\bibitem[BK81]{BK81}
J-L. Brylinski and M. Kashiwara, Kazhdan-Lusztig conjecture and holonomic systems, \emph{Invent. Math.} \textbf{64} (1981), no. 3, 387\textendash410. 

\bibitem[BM01]{BM01}
T. Braden and R. MacPherson, From moment graphs to intersection cohomology, \emph{Math. Ann.} \textbf{321} (2001), no. 3, 533\textendash551.

\bibitem[Deo87]{Deo87}
V. V. Deodhar, On some geometric aspects of Bruhat orderings II. The parabolic analogue of Kazhdan-Lusztig polynomials, \emph{J. Algebra} \textbf{111} (1987), no. 2, 
483\textendash506.

\bibitem[EW14]{EW14}
B. Elias and G. Williamson, The Hodge theory of Soergel bimodules, \emph{Ann. of Math.} \textbf{180} (2014), no. 3, 1089\textendash1136. 

\bibitem[Fie08a]{Fie08a}
P. Fiebig, Sheaves on moment graphs and a localization of Verma flags, \emph{Adv. in Math.} \textbf{217} (2008), no. 2, 683\textendash712. 

\bibitem[Fie08b]{Fie08b}
P. Fiebig, The combinatorics of Coxeter categories, \emph{Trans. Amer. Math. Soc.} \textbf{360} (2008), no. 8, 4211\textendash4233.

\bibitem[Fie10]{Fie10}
P. Fiebig, Lusztig's conjecture as a moment graph problem, \emph{Bull. London Math. Soc.} \textbf{42} (2010), no. 6, 957\textendash972.

\bibitem[Fie11]{Fie11}
P. Fiebig, Sheaves on affine Schubert varieties, modular representations and Lusztig's conjecture, \emph{J. Amer. Math. Soc.} \textbf{24} (2011), no. 1, 133\textendash181.

\bibitem[Fie16]{Fie16}
P. Fiebig, Moment graphs in representation theory and geometry, \emph{Schubert Calculus\textendash{}Osaka 2012, Advanced Studies in Pure Mathematics} \textbf{71} (2016), 
75\textendash96.

\bibitem[GKM98]{GKM98}
M. Goresky, R. Kottwitz, and R. MacPherson, Equivariant cohomology, Koszul duality, and the localization theorem, \emph{Invent. Math.} \textbf{131} (1998), no. 1, 
25\textendash83.

\bibitem[Hum90]{Hum90}
J. E. Humphreys, \emph{Reflection groups and Coxeter groups}, Cambridge University Press, Cambridge, 1990. 

\bibitem[Kac90]{Kac90}
V. G. Kac, \emph{Infinite dimensional Lie algebras}, Third edition, Cambridge University Press, Cambridge, 1990.

\bibitem[Kat85]{Kat85}
S. Kato, On the Kazhdan-Lusztig polynomials for affine Weyl groups, \emph{Adv. in Math.} \textbf{55} (1985), no. 2, 103\textendash130. 

\bibitem[KL79]{KL79}
D. Kazhdan and G. Lusztig, Representations of Coxeter groups and Hecke algebras, \emph{Invent. Math.} \textbf{53} (1979), no. 2, 165\textendash184. 

\bibitem[Lan14]{Lan14}
M. Lanini, Categorification of a parabolic Hecke module via sheaves on moment graphs, \emph{Pac. J. Math.} \textbf{271} (2014), no. 2, 415\textendash444.

\bibitem[Lan15]{Lan15}
M. Lanini, On the stable moment graph of an affine Kac-Moody algebra, \emph{Trans. Amer. Math. Soc.} \textbf{367} (2015), no. 6, 4111\textendash4156.

\bibitem[Lus80]{Lus80}
G. Lusztig, Hecke algebras and Jantzen's generic decomposition patterns, \emph{Adv. in Math.} \textbf{37} (1980), no. 2, 121\textendash164.

\bibitem[LW]{LW}
N. Libedinsky and G. Williamson, The anti-spherical category, preprint, \href{https://arxiv.org/abs/1702.00459}{arXiv:1702.00459}.

\bibitem[Soe97]{Soe97}
W. Soergel, Kazhdan-Lusztig polynomials and a combinatoric[s] for tilting modules, \emph{Represent. Theory} \textbf{1} (1997), 83\textendash114 (electronic).

\bibitem[Soe07]{Soe07}
W. Soergel, Kazhdan-Lusztig-Polynome und unzerlegbare Bimoduln {\"u}ber Polynomringen, \emph{J. Inst. Math. Jussieu} \textbf{6} (2007), no. 3, 501\textendash525.

\end{thebibliography}
\end{document}